\documentclass{article}
\usepackage{authblk}

\usepackage[english]{babel}
\usepackage{amssymb,amsthm,amsmath,mathrsfs}
\usepackage{graphicx}
\usepackage{xcolor}
\usepackage{enumerate}
\textwidth 165mm \textheight 17cm \evensidemargin -0cm
\oddsidemargin 0cm

\usepackage{cite}

\newtheorem{theorem}{Theorem}[section]
\newtheorem{proposition}[theorem]{Proposition}

\newtheorem{remark}[theorem]{Remark}
\newtheorem{problem}{Problem}

{\theoremstyle{definition}
\newtheorem*{definition*}{Definition}
}
\newtheorem*{proposition*}{Proposition}
\newtheorem*{corollary*}{Corollary}
\newtheorem*{lemma*}{Lemma}
\newtheorem*{remark*}{Remark}

\newcommand{\F}{\mathbb {F}}

\renewcommand{\P}{\mathbb {P}}
\renewcommand{\S}{\mathcal{S}}

\title{Rational points on cubic surfaces and AG codes from the Norm-Trace curve}
\author[1]{Matteo~Bonini
\thanks{The author was supported by the Irish Research Council, grant n. GOIPD/2020/597.}}
\author[2]{Massimiliano Sala}
\author[3]{Lara Vicino}
\affil[1]{School of Mathematics and Statistics, University College Dublin, Dublin, Ireland e-mail: matteo.bonini@ucd.ie}
\affil[2]{Department of Mathematics, University of Trento, Trento, Italy e-mail: massimiliano.sala@unitn.it}
\affil[3]{Department of Applied Mathematics and Computer Science, Technical University of Denmark, Kgs. Lyngby, Denmark e-mail: lavi@dtu.dk}
\date{}

\begin{document}

\maketitle

\begin{abstract}
In this paper we give a complete characterization of the intersections between the Norm-Trace curve over $\F_{q^3}$ and the curves of the form $y=ax^3+bx^2+cx+d$, generalizing a previous result by Bonini and Sala, providing more detailed information about the weight spectrum of one-point AG codes arising from such curve. We also derive, with explicit computations, some general bounds for the number of rational points on a cubic surface defined over $\F_{q}$. 
\end{abstract}

{\bf Keywords:} Norm-trace curve - AG Code - Weight spectrum - Cubic Surfaces

{\bf MSC Codes:} 14G50 - 11T71 - 94B27

Many of the best performing algebraic codes are known to be Algebraic Geometry (AG) codes, which arise from algebraic varieties over finite fields. Among them, the most studied codes are the ones arising from algebraic curves, that were introduced by Goppa in the '80s; see \cite{goppa1981codes,Goppa82} for a detailed description. 

Let $\mathcal{X}$ be an algebraic curve defined over the finite field with $q$ elements $\mathbb{F}_q$. The parameters of codes arising from $\mathcal{X}$ strictly depend on some geometrical properties of the curve. In general, curves with many $\mathbb{F}_q$-rational places with respect to their genus give rise to long AG codes with good parameters. For this reason, maximal curves, that is, curves attaining the Hasse-Weil upper bound, have been widely investigated in the literature; see  \cite{Stichtenoth1988,Tiersma1987,XL2000,BMZ1,M2004,BalB}.

In general, the determination of the weight spectrum of a code $C$ (i.e. the set of the possible weights of $C$) is a very difficult task, see for instance \cite{Klove}.

For AG codes, it is possible to derive information about their weight spectrum by the study of the intersection of the base curve $\mathcal{X}$ and low degree curves, as done in  \cite{ballico2013duals,BB2018,C2012,MPS2014,MPS2016}.

The Norm-Trace curves are a natural generalization of the celebrated Hermitian curve to any extension field $\F_{q^r}$, and their codes have been widely studied; see \cite{geil2003codes,BS2018,ballico2013duals,FMT2013,MTT2008}.

In this paper, we focus on the intersection between the Norm-Trace curve over $\F_{q^3}$ and curves of the form $y = Ax^3 + Bx^2 + Cx + D$, giving the proof of a result which corrects a previous conjecture stated in \cite{BS2018}. In addition to this, we partially deduce the weight spectrum of its one-point codes in the place at the infinity.

In order to obtain these results, we translate the problem of finding the planar intersection between the cubic Norm-Trace curve and the above mentioned curves into that of counting the number of $\F_q$-rational points of certain cubic surfaces, which is a well-knwon topic in algebraic geometry in positive characteristic, see \cite{coray:singular,manin:cubic,weil1958abstract}.
For this reason, in section \ref{sec:general} we start by presenting general results on the number of $\F_q$-rational points on cubic surfaces defined over $\F_q$. These results end up constituting a partial generalization of a classical result by Weil and we obtain them exploiting classical results on cubic surfaces over finite fields.

In sections \ref{2cubic}, \ref{B=C=0} and \ref{sec:isosing:general} we analyze cubic surfaces arising from the intersection between the Norm-Trace curve and curves of the form $y = Ax^3 + Bx^2 + Cx + D$ over $\F_{q^3}$, proving Theorem \ref{thm:goal}. 

Finally, in section \ref{sec:agcodes}, we use these results to investigate the weight spectrum of a certain family of one-point codes arising from the cubic Norm-Trace curve. 

\section{Preliminaries}
Let $q=p^h$, where $p$ is a prime and $h>0$ an integer, and denote with $\F_q$ the finite field with $q$ elements. 

We recall that the \textit{norm} $\text{N}_{\F_q}^{\F_{q^r}}$ and the \textit{trace} $\text{T}_{\F_q}^{\F_{q^r}}$, where $r$ is a positive integer, are functions from $\F_{q^r}$ to $\F_q$ such that
\[
\text{N}_{\F_q}^{\F_{q^r}}(x)=x^{\frac{q^r-1}{q-1}}=x^{q^{r-1}+q^{r-2}+\dots+q+1}
\]
and
\[
\text{T}_{\F_q}^{\F_{q^r}}(x)=x^{q^{r-1}}+x^{q^{r-2}}+\dots+x^q+x.
\]
When $q$ and $r$ can be derived unequivocally from the context, we will omit the subscripts.

\subsection{The Norm-Trace curve}
The Norm--Trace curve $\mathcal{N}_r$ is the curve defined over the affine plane $\mathbb{A}^2(\F_{q^r})$ by the equation 
\begin{equation}
\label{eq:NormTrace}
\mathrm{N}(x)=\mathrm{T}(y).
\end{equation}

If $r=2$ the curve $\mathcal{N}_{r}$ is smooth, while if $r\geq 3$ it can be easily seen that $\mathcal{N}_{r}$ has a singular point which is the point at the infinity $P_{\infty}$. It is then well-known that $\mathcal{N}_r$ has $q^{2r-1}$ affine places and a single place at the infinity; in fact, there is exactly one place centered at each affine point of $\mathcal{N}_r$ (these are all smooth points) and the point at the infinity is either smooth (in the case $r=2$) and hence center of exactly one place, or it is singular and center of only one branch of the curve (if $r\geq 3$), so that $\mathcal{N}_{r}$ has a unique place at the infinity also in this case.

If $r=2$, $\mathcal{N}_r$ coincides with the Hermitian curve, and this is the only case in which $\mathcal{N}_r$ is smooth, since, as noted above, for $r\ge3$ it has a singularity in $P_{\infty}$.

Moreover it is known that its Weierstrass semigroup in the place centered at $P_{\infty}$ is generated by $\left\langle q^{r-1}, \frac{q^r-1}{q-1}\right\rangle$, see \cite{geil2003codes}. Also, the automorphism group is determined by the following result.
\begin{theorem}[\cite{BBZ2020}]
The automorphism group of $\mathcal{N}_r$ $\mathrm{Aut}(\mathcal{N}_{r})$ has order $q^{r-1}(q^r-1)$ and is a semidirect product $G\rtimes C$, where
\[
G=\left\{ (x,y)\mapsto(x,y+a)\mid \mathrm{T}(a)=0 \right\}
\] 
\[
C=\{(x,y)\mapsto(b x,b^{\frac{q^r-1}{q-1}}y)\mid b\in\mathbb F_{q^r}^*\}.
\]
\end{theorem}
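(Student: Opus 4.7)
The plan is to exhibit $G \rtimes C$ as a subgroup of $\mathrm{Aut}(\mathcal{N}_r)$ of the stated order, and then to show that this exhausts all automorphisms. First I would verify directly that $G,\, C \leq \mathrm{Aut}(\mathcal{N}_r)$: for $G$ one uses the additivity of $\mathrm{T}$ together with $\mathrm{T}(a) = 0$; for $C$ one uses $\mathrm{N}(bx) = \mathrm{N}(b)\mathrm{N}(x)$ and the fact that $\mathrm{N}(b) = b^{(q^r-1)/(q-1)} \in \F_q$, whence $\mathrm{T}(\mathrm{N}(b) y) = \mathrm{N}(b)\mathrm{T}(y)$. Counting $|G| = q^{r-1}$, $|C| = q^r-1$, $G \cap C = \{\mathrm{id}\}$, and checking that conjugation sends the translation by $a \in G$ to the translation by $\mathrm{N}(b)\,a$ (still in $G$ since $\mathrm{T}(\mathrm{N}(b)a) = \mathrm{N}(b)\mathrm{T}(a) = 0$), one obtains a semidirect product of order $q^{r-1}(q^r-1)$.

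For the reverse inclusion, which requires $r \geq 3$, the key observation is that $P_\infty$ is the unique singular point of $\mathcal{N}_r$ and is therefore fixed by every automorphism; hence each Riemann--Roch space $L(m P_\infty)$ is $\sigma^*$-invariant. Using the Weierstrass semigroup $\langle q^{r-1},\, (q^r-1)/(q-1)\rangle$ and the inequality $2 q^{r-1} > (q^r-1)/(q-1)$, one finds $L(q^{r-1} P_\infty) = \langle 1, x \rangle$ and $L((q^r-1)/(q-1)\,P_\infty) = \langle 1, x, y \rangle$. Therefore
\[
\sigma^*(x) = \alpha + \beta x, \qquad \sigma^*(y) = \gamma + \delta x + \epsilon y,
\]
for some $\alpha, \gamma, \delta \in \F_{q^r}$ and $\beta, \epsilon \in \F_{q^r}^*$.

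Imposing $\mathrm{N}(\sigma^*(x)) = \mathrm{T}(\sigma^*(y))$ and expanding the trace, the sum $\sum_{i=0}^{r-1}\epsilon^{q^i} y^{q^i}$ must lie in $\F_{q^r}(x)$, since the left-hand side is polynomial in $x$. Reducing modulo the minimal polynomial $\mathrm{T}(y) - \mathrm{N}(x)$ of $y$ over $\F_{q^r}(x)$ and using the linear independence of $\{1, y, y^q, \ldots, y^{q^{r-2}}\}$ over $\F_{q^r}(x)$ forces $\epsilon^{q^i} = \epsilon^{q^{r-1}}$ for every $i \in \{0, \ldots, r-2\}$, whence $\epsilon \in \F_q^*$ and the sum collapses to $\epsilon\,\mathrm{N}(x)$. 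Expanding $\mathrm{N}(\alpha + \beta x) = \prod_{i=0}^{r-1}(\alpha^{q^i} + \beta^{q^i} x^{q^i})$ and comparing coefficients of $x^{\sum_{i \in I} q^i}$ for each subset $I \subseteq \{0, \ldots, r-1\}$: the case $|I|=r$ gives $\mathrm{N}(\beta) = \epsilon$; each subset with $2 \leq |I| \leq r-1$ contributes a product $\prod_{i \in I}\beta^{q^i}\prod_{j \notin I}\alpha^{q^j}$ which must vanish, and since all $\beta^{q^i}$ are nonzero this forces $\alpha = 0$. The $|I|=1$ equations then yield $\delta = 0$, and the $|I|=0$ equation reduces to $\mathrm{T}(\gamma) = \mathrm{N}(\alpha) = 0$. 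Setting $b := \beta$ and $a := \gamma$, the map $\sigma$ factors as $(x,y) \mapsto (bx, \mathrm{N}(b)y) \in C$ followed by $(x,y) \mapsto (x, y+a) \in G$, so $\sigma \in G \rtimes C$.

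The main obstacle lies in this third step: identifying $\epsilon \in \F_q$ via the function-field argument and handling the combinatorics of the mixed monomial coefficients to conclude $\alpha = 0$. Once these are in hand, the remaining identifications are immediate and the semidirect product structure is exactly the one described in the statement.
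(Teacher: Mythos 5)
The paper does not actually prove this statement; it imports it verbatim from the cited reference \cite{BBZ2020}, so there is no in-paper proof to compare against. Judged on its own terms, most of your argument is sound: the verification that $G$ and $C$ are automorphisms, the order count, the conjugation formula $t_a\mapsto t_{\mathrm{N}(b)a}$, the identification $L(q^{r-1}P_\infty)=\langle 1,x\rangle$ and $L\bigl(\tfrac{q^r-1}{q-1}P_\infty\bigr)=\langle 1,x,y\rangle$ via $2q^{r-1}>\tfrac{q^r-1}{q-1}$, the reduction of $\sum_i\epsilon^{q^i}y^{q^i}$ modulo $\mathrm{T}(y)-\mathrm{N}(x)$ forcing $\epsilon\in\F_q^*$, and the coefficient comparison on the monomials $x^{\sum_{i\in I}q^i}$ forcing $\alpha=\delta=0$, $\mathrm{N}(\beta)=\epsilon$, $\mathrm{T}(\gamma)=0$ (this last step correctly uses $r\geq 3$ to have subsets with $2\leq |I|\leq r-1$) are all correct.

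The genuine gap is the single sentence on which the whole reverse inclusion rests: ``$P_\infty$ is the unique singular point of $\mathcal{N}_r$ and is therefore fixed by every automorphism.'' Here $\mathrm{Aut}(\mathcal{N}_r)$ is the automorphism group of the curve, i.e.\ of its function field or smooth model, not the group of projectivities of $\P^2$ preserving the plane model; an abstract automorphism need not be induced by a linear map and need not preserve the singular locus of any chosen plane model. (The case $r=2$ illustrates the danger from the other side: the plane model is smooth, and $\mathrm{Aut}$ is the full $\mathrm{PGU}(3,q)$, which moves $P_\infty$ freely; nothing in your argument explains why increasing $r$ suddenly pins $P_\infty$ down.) Proving that every automorphism fixes the place at infinity when $r\geq 3$ is precisely the hard content of the theorem in \cite{BBZ2020}, where it requires a genuine argument --- e.g.\ an analysis of the short orbits and of the Sylow $p$-subgroup containing the translation group $G$, or showing that $P_\infty$ is the unique place with Weierstrass semigroup $\langle q^{r-1},\tfrac{q^r-1}{q-1}\rangle$. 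You would need to supply such an argument (or an explicit citation for that step) before the Riemann--Roch computation, which is otherwise complete, can be invoked.
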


Our main aim is the study of the planar intersection (i.e. the intersection counted without multiplicity) between $\mathcal{N}_r$ and the cubic curves of the form $y=ax^3+bx^2+cx+d$, where $a,b,c,d\in\F_{q^r}$ and $a\ne0$, in the case $r=3$. 
The case $r=2$ and $a=b=0$ was investigated in \cite{ballico2014} and the case $r=2$ and $a=0$ was completely investigated in \cite{MPS2014,donati2009intersection}. On the other hand, the case $r=3$ and $a=0$ was investigated in \cite{BS2018}.  

In \cite{BS2018}, the authors claim the following result.
\begin{theorem}
	The number of planar intersections between $\mathcal{N}_3$ and cubic curves of the form $y=ax^3+bx^2+cx+d$, where $a,b,c,d\in\F_{q^3}$, $a\neq 0$, is bounded by $q^2+7q+1$, when the surface defined over $\F_{q^3}$ by the equation 
	\[
    X_0X_1X_2=AX_0^3+A^qX_1^3+A^{q^2}X_2^3+BX_0^2+B^qX_1^2+B^{q^2}X_2^2+CX_0+C^qX_1+C^{q^2}X_2+E
	\]
	is irreducible, for $A,B,C,D,E\in\F_{q^3}$.
\end{theorem}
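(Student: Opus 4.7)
\medskip
\noindent\textbf{Proof plan.}
The plan is to translate the count of planar intersections into a count of $\F_q$-rational points of a cubic surface defined over $\F_q$, and then invoke the general bound developed in Section~\ref{sec:general}. Since each affine intersection of $\mathcal{N}_3$ with $y = ax^3 + bx^2 + cx + d$ is determined by its $x$-coordinate, the intersections are in bijection with
\[
\bigl\{\, x \in \F_{q^3} \;:\; \mathrm{N}(x) = \mathrm{T}(ax^3 + bx^2 + cx + d) \,\bigr\}.
\]

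Introducing the Galois conjugates $X_i = x^{q^i}$ for $i = 0,1,2$, one has $\mathrm{N}(x) = X_0 X_1 X_2$, while expanding $\mathrm{T}$ termwise and setting $A = a$, $B = b$, $C = c$, $E = \mathrm{T}(d) \in \F_q$ yields exactly the right-hand side of the surface equation, evaluated at $(X_0, X_1, X_2)$. Although $S$ is written over $\F_{q^3}$, this identity in fact takes place in $\F_q$. To exploit this I would fix a primitive element $\omega$ of $\F_{q^3}/\F_q$, write $x = u_0 + u_1 \omega + u_2 \omega^2$ with $u_j \in \F_q$, and substitute $X_i = u_0 + u_1 \omega^{q^i} + u_2 \omega^{2 q^i}$ into the surface equation. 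The resulting polynomial $F(u_0, u_1, u_2)$ has total degree $3$; its coefficients are symmetric functions of the Galois conjugates of $\omega$ and therefore lie in $\F_q$, so $F$ defines an affine cubic surface $\tilde{S}$ over $\F_q$ whose $\F_q$-rational points are in bijection with the intersections we want to count. The matrix carrying $(u_0,u_1,u_2)$ to $(X_0,X_1,X_2)$ is a Vandermonde in the distinct elements $\omega, \omega^q, \omega^{q^2}$, hence invertible over $\F_{q^3}$, so $\tilde{S}_{\F_{q^3}} \cong S$; in particular, the irreducibility hypothesis on $S$ is equivalent to $\tilde{S}$ being absolutely irreducible.

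The theorem then follows from the general bound of Section~\ref{sec:general}, namely that an absolutely irreducible cubic surface over $\F_q$ has at most $q^2 + 7q + 1$ $\F_q$-rational points. The hard part of the whole argument is in fact this cubic-surface bound: for smooth cubic surfaces it is the classical Weil estimate, obtained from the trace of Frobenius on the $7$-dimensional Picard group, but the irreducible singular cases require a separate analysis based on the classification of singular cubic surfaces over finite fields. Granted the bound, the reduction above is essentially bookkeeping together with the Galois-descent step identifying $S$ with $\tilde{S}_{\F_{q^3}}$.
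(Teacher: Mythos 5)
Your reduction to a cubic surface over $\F_q$ is sound and matches the paper's (the paper uses a normal basis $\{\alpha,\alpha^q,\alpha^{q^2}\}$ where you use a power basis, but the Galois-descent step and the invertible change of coordinates identifying the $\F_q$-model with the $\F_{q^3}$-surface are the same). The gap is in the final step, where you invoke ``the general bound of Section~\ref{sec:general}, namely that an absolutely irreducible cubic surface over $\F_q$ has at most $q^2+7q+1$ $\F_q$-rational points.'' No such general bound exists, and in fact it is false: if the surface is a cone over a smooth absolutely irreducible plane cubic $\mathcal{E}$, then $|\S(\F_q)|=mq+1$ where $m=|\mathcal{E}(\F_q)|$ can be as large as $q+\lfloor 2\sqrt{q}\rfloor+1$, giving roughly $q^2+2q\sqrt{q}+1$ points, which exceeds $q^2+7q+1$ for $q\ge 13$. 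This is precisely the point of the paper: the statement you are proving (the claim from \cite{BS2018}) is \emph{not} correct as written, and the corrected version (Theorem~\ref{thm:goal}) must add the hypothesis that the surface is not a cone over a smooth irreducible plane cubic. A proof that purports to establish the original statement for every irreducible surface must be wrong somewhere, and the error is hidden in the unproved ``general bound.''

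Even away from the cone case, Theorem~\ref{thm:goalgeneral} does not cover all absolutely irreducible cubic surfaces: it handles non-isolated singularities, isolated singularities with an $\F_q$-rational singular point, and exactly two isolated singularities. The remaining configurations --- three conjugate $\F_{q^3}$-rational double points, or four conjugate $\F_{q^4}$-rational double points, with no $\F_q$-rational singularity --- are explicitly left open there (Remark~\ref{rem:cases2} gives only the trivial bound $3q^2$), and the smooth case needs Weil's theorem separately. The paper closes the three- and four-point cases in Section~\ref{sec:isosing:general} by Cremona transformations reducing to a quadric (bound $q^2+2q+1$) and to a plane (bound $q^2$) respectively, using the explicit correspondence between $\F_q$-points of $\S_1$ and points of the form $(\gamma,\gamma^q,\gamma^{q^2})$ on $\S_2$. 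Your proposal would need to supply these arguments (they are not ``bookkeeping''), and it would need to restrict the hypothesis to exclude the cone-over-elliptic-curve case before the conclusion can hold.
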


Still, a proof of this result is not given in the paper.  The aim of this paper is to give a detailed proof of this bound in any possible case, showing that this is not correct when such surface is a cone over an elliptic curve.

\subsection{Algebraic Geometry codes}
	
	In this section we recall some basic facts on AG codes. For a detailed discussion we refer to \cite{stich:agcodes}.
	
	Let $\mathcal{X}$ be a projective curve over the finite field $\F_q$ and  consider the function field $\F_q(\mathcal{X})$ of rational functions defined over $\F_q$. Denote with $\mathcal{X}(\F_q)$ the set of the $\F_q$-rationals points of $\mathcal{X}$.
	A divisor $D$ on $\mathcal{X}$ can be seen as a finite sum $\sum_{P\in\mathcal{X}(\overline{\F}_q)}n_PP$, where the $n_P$s are integers.  For a function $f \in \mathbb{F}_q(\mathcal{X})$, $(f)$ denotes the divisor associated to $f$. 
	A divisor $D$ is $\mathbb{F}_q$-rational if it coincides with its image $\sum_{P\in\mathcal{X}(\overline{\F}_q)}n_P^qP^q$ under the Frobenius map over $\F_q$. 
	
	Given an $\F_q$-rational divisor $D=\sum_{i=1}^n n_iP_i$ on $\mathcal{X}$, its support is defined as $\mathrm{supp}(D)=\{P_i : n_i \neq 0\}$.  
	
	The Riemann-Roch space associated with $D$ is the $\mathbb{F}_q$-vector space 
	\[
	\mathscr{L}(D)=\{f\in\F_{q}(\mathcal{X}) \ |\ (f)+D\ge0\}\cup \{0\}.
	\]
	It is known that $\mathscr{L}(D)$  is an $\F_q$-vector space of finite dimension $\ell(D)$. The exact dimension of this space can be computed using Riemann-Roch theorem.
	
	Consider now the divisor $D=\sum_{i=1}^nP_i$ where all the $P_i$'s are $\F_q$-rational. Let $G$ be another $\F_q$-rational divisor on $\mathcal{X}$ such that $supp(G)\cap supp(D)=\emptyset$. Consider the evaluation map 
	\[
	\begin{split}
	e_D:\quad \mathscr{L}(G)&\longrightarrow \F_q^n\\
	f\quad &\longmapsto e_D(f)=(f(P_1),\dots,f(P_n)).
	\end{split}
	\]
	The map $e_D$ is $\F_q$-linear and it is injective if $n>\deg(G)$.
	
	The AG code $C_{\mathscr{L}}(D,G)$, also called functional code, associated with the divisors $D$ and $G$ is defined as $C_{\mathscr{L}}(D,G):=e_D(\mathscr{L}(G))=\{(f(P_1),\ldots,f(P_n)) \ | \ f\in \mathscr{L}(G)\}\subseteq \F_q^n$. Such a code is an $[n,\ell(G)-\ell(G-D),d]_q$ code, where $d\ge \bar{d}=n-\deg(G)$ and $\bar{d}$ is the so-called designed minimum distance (of such code).

\section{General results on the number of rational points on cubic surfaces}
\label{sec:general}

In this section we will exploit well-known results on cubic surfaces over finite fields to explicitly obtain some general results on the number of rational points on a cubic surface $\S$ defined over a finite field $\F_q$. In addition to being theoretically interesting, these results will be useful later on in the paper; in fact, we are going to study rational points on particular cubic surfaces in order to bound the intersections between the norm-trace curve over $\F_{q^3}$ and the curves of the form $y=ax^3+bx^2+cx+d$.

Our final goal consists in obtaining a bound in the form 
$$|\S(\F_q)| \leq q^2+7q+1$$
for such surfaces, since this has interesting impact on the weight spectrum of the induced AG codes. We will obtain this result for a specific family of cubic surfaces. However, in this section we show that the bound stated in Theorem \ref{thm:goal} holds more in general, being a partial generalization of the following result due to Weil.

\begin{theorem}[\cite{manin:cubic}, Theorem 23.1]
		\label{thm:weil}
		Let $\mathcal{S}$ be a smooth irreducible cubic surface over $\F_q$, then the number of points of $\mathcal{S}(\F_q)$ is exactly
		\[
		|\mathcal{S}(\F_q)|= q^2+\eta q+1
		\]
		where $\eta\in\{-2,-1,0,1,2,3,4,5,7\}$.
\end{theorem}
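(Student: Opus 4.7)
The plan is to reduce the point-counting to the representation theory of the Weyl group $W(E_6)$ acting on the geometric Picard lattice. Any smooth cubic surface $\mathcal{S}$ is geometrically rational: over $\overline{\F}_q$ it is isomorphic to the blow-up of $\mathbb{P}^2$ at six points in general position. Consequently $\mathrm{Pic}(\mathcal{S}_{\overline{\F}_q})$ is a free abelian group of rank $7$, the canonical class $K_{\mathcal{S}}$ is defined over $\F_q$, and the orthogonal complement $K_{\mathcal{S}}^{\perp}$ endowed with the negative of the intersection pairing is the $E_6$ root lattice, inside which the $27$ lines of $\mathcal{S}$ realize the minimal Weyl orbit.

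First I would invoke the Grothendieck--Lefschetz trace formula. Since $\mathcal{S}$ is a smooth rational surface, the $\ell$-adic cohomology groups $H^i(\mathcal{S}_{\overline{\F}_q},\mathbb{Q}_\ell)$ vanish for $i=1,3$, the groups $H^0$ and $H^4$ are one-dimensional, and $H^2$ coincides (up to a Tate twist) with $\mathrm{Pic}(\mathcal{S}_{\overline{\F}_q})\otimes\mathbb{Q}_\ell$. The trace formula then yields
\[
|\mathcal{S}(\F_q)| \;=\; 1 + q\cdot\mathrm{tr}\!\left(\mathrm{Frob}\,\bigm|\,\mathrm{Pic}(\mathcal{S}_{\overline{\F}_q})\otimes\mathbb{Q}\right) + q^2.
\]
Because Frobenius permutes the $27$ lines, preserves the intersection form, and fixes $K_{\mathcal{S}}$, its action on $K_{\mathcal{S}}^{\perp}$ factors through $W(E_6)$. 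Setting $\eta = 1+\mathrm{tr}(\mathrm{Frob}\mid K_{\mathcal{S}}^{\perp})$ reduces the statement to determining which integer values are attained by the character of the $6$-dimensional reflection representation of $W(E_6)$.

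Next I would go through the $25$ conjugacy classes of $W(E_6)$ --- equivalently, Manin's list of Frobenius splitting types of the $27$ lines --- and read off their traces on the reflection representation. The integer values that actually occur are $-3,-2,-1,0,1,2,3,4,6$, and adding $1$ (for the Frobenius-invariant class $K_{\mathcal{S}}$) gives the announced set $\{-2,-1,0,1,2,3,4,5,7\}$.

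The main obstacle is this final enumeration: one must verify that no other traces arise and that each of the nine listed values is actually realized by a Frobenius action on a cubic surface over some $\F_q$. This is precisely the combinatorial heart of Theorem 23.1 in \cite{manin:cubic}, carried out via an explicit tabulation of the conjugacy classes of $W(E_6)$ and their characteristic polynomials; any self-contained argument would essentially have to reproduce this tabulation or extract it from the character table of $W(E_6)$ on its natural reflection representation.
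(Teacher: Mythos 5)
The paper does not prove this statement: it is quoted verbatim from Manin \cite{manin:cubic} (Theorem 23.1) and used as a black box. Your sketch is a correct outline of the standard argument behind that cited result --- the Grothendieck--Lefschetz trace formula for a geometrically rational surface reduces the count to $1+q\,\mathrm{tr}(\mathrm{Frob}\mid\mathrm{Pic})+q^2$, the Frobenius action on $K_{\mathcal S}^{\perp}$ factors through $W(E_6)$, and the set $\{-2,-1,0,1,2,3,4,5,7\}$ is exactly $1$ plus the set of traces attained on the $6$-dimensional reflection representation --- with the only deferred ingredient being the explicit tabulation over the $25$ conjugacy classes, which is precisely where the content of Manin's proof lies (note that for the membership statement as phrased, only the non-occurrence of other trace values is needed, not the realizability of all nine).
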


In fact, we are going to prove the following result.

\begin{theorem}
\label{thm:goalgeneral}
Consider an $\F_q$-rational cubic surface $\S$ and assume it is not a cone over a smooth irreducible cubic plane curve. If $\S$ satisfies one of the following conditions
\begin{itemize}
    \item[(i)] $\S$ is absolutely irreducible with non-isolated singularities,
    \item[(ii)] $\S$ is absolutely irreducible with only isolated singularities, one of which is an $\F_q$-rational point,
    \item[(iii)] $\S$ is absolutely irreducible with only two isolated singularities,
\end{itemize}
then
		\[
		|\S(\F_q)|\le q^2+7q+1.
		\]
\end{theorem}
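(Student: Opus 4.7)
The plan is to treat the three cases separately. In each case I exhibit a natural linear subvariety (a point or a line) contained in $\S$ from which to project, and then I count the $\F_q$-rational points of $\S$ by summing along the fibers of this projection.

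For case (i), I first invoke the classical dichotomy: an absolutely irreducible cubic surface in $\P^3$ with $1$-dimensional singular locus is either a cone over an irreducible plane cubic curve, or else has exactly a line $L$ as its singular locus, in which case every plane $\Pi$ through $L$ cuts $\S$ in $2L+\ell_\Pi$ for some residual line $\ell_\Pi$. Since the hypothesis excludes cones over smooth irreducible plane cubics, in the cone subcase the base is a singular irreducible plane cubic $C$, and the $\F_q$-rational lines of the cone correspond bijectively to the $\F_q$-rational points of $C$; hence
\[
|\S(\F_q)|=1+q\,|C(\F_q)|\le 1+q(q+2)=q^2+2q+1,
\]
the bound $|C(\F_q)|\le q+2$ being attained by a nodal irreducible plane cubic with non--$\F_q$-rational branches. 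In the scroll subcase, summing over the $q+1$ $\F_q$-rational planes $\Pi$ through $L$ gives $|\S(\F_q)|\le|L(\F_q)|+(q+1)q=q^2+2q+1$.

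For case (ii), let $P\in\S(\F_q)$ be the given rational isolated singularity. The multiplicity of $\S$ at $P$ must equal $2$, because a triple point would make $\S$ a cone with vertex $P$ over a plane cubic, which is impossible under our hypotheses (it is either a cone over a smooth irreducible cubic, or acquires non-isolated singularities, or is reducible). Choose affine coordinates at $P$, so that $\S$ is defined by $f=f_2+f_3$ with $f_i$ homogeneous of degree $i$; a line through $P$ with direction $(x_0,y_0,z_0)\in\P^2$ lies on $\S$ if and only if $f_2(x_0,y_0,z_0)=f_3(x_0,y_0,z_0)=0$. Since $\S$ is absolutely irreducible, $f_2$ and $f_3$ share no linear factor (otherwise the homogenized defining polynomial $F=wf_2+f_3$ would factor as $L\cdot(wL'+Q)$), so B\'ezout in $\P^2$ yields at most $6$ lines of $\S$ through $P$. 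Of the $q^2+q+1$ $\F_q$-rational lines through $P$ in $\P^3$, those contained in $\S$ each contribute $q$ rational points away from $P$, while the remaining ones contribute at most one extra rational point by the residual B\'ezout $\ell\cap\S=2P+R$. Writing $\lambda\le 6$ for the number of $\F_q$-rational lines of $\S$ through $P$,
\[
|\S(\F_q)|\le 1+\lambda q+(q^2+q+1-\lambda)\le q^2+7q-4.
\]

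For case (iii), let $P_1,P_2$ be the two isolated singularities. If one of them is $\F_q$-rational, the argument of case (ii) applies. Otherwise, by Galois-invariance of the singular locus, the pair is defined over $\F_{q^2}$, the line $\ell=\overline{P_1P_2}$ is $\F_q$-rational, and since $\ell$ meets $\S$ at two double points with total multiplicity $\ge 4>3=\deg\S$, one has $\ell\subset\S$. Each $\F_q$-rational plane $\Pi$ through $\ell$ cuts $\S$ in $\ell+C_\Pi$, where the residual conic $C_\Pi$ passes through both $P_1$ and $P_2$. The key observation is that $C_\Pi$ cannot contain two distinct $\F_q$-rational lines, since any $\F_q$-rational line through the non-rational $P_i$ would be Galois-stable and hence equal $\ell$; therefore $C_\Pi$ is either a smooth $\F_q$-rational conic (with exactly $q+1$ rational points), a pair of Galois-conjugate lines (contributing a single rational point), or itself contains $\ell$ (contributing at most $q$ points off $\ell$). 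In every case $|(C_\Pi\setminus\ell)(\F_q)|\le q+1$, whence summing over the $q+1$ $\F_q$-rational planes through $\ell$ gives
\[
|\S(\F_q)|\le(q+1)+(q+1)^2=q^2+3q+2.
\]
The main obstacle is the structural step in case (i): the arguments in (ii) and (iii) reduce to elementary B\'ezout estimates, but (i) rests on the classical classification of irreducible cubic surfaces with positive-dimensional singular locus, from which the residual-line description of a singular scroll is extracted. Once this is in place, all three bounds $q^2+2q+1$, $q^2+7q-4$ and $q^2+3q+2$ lie safely below $q^2+7q+1$, proving the theorem.
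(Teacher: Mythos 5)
Your proof is correct, and the bounds you obtain in each case ($q^2+2q+1$, $q^2+7q-4$, $q^2+3q+2$) all imply the stated one. The overall three-way decomposition matches the paper's, but you substitute more elementary arguments for two of its key external inputs. In case (i) the paper disposes of the non-cone subcase by invoking Weil's theorem on birationally trivial surfaces (the surface being the anticanonical model of a degree-$3$ weak del Pezzo), which only yields $q^2+q+1+tq$ with $t\in[-3,6]$; your direct count of the residual lines $\ell_\Pi$ in the pencil of planes through the double line $L$ is self-contained and sharper, though it rests on the same structural fact the paper establishes via Bertini--Nakai in Proposition \ref{prop:singline}, namely that the singular locus is a double line (you rightly flag this as the one classification input). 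In case (ii) the paper bounds the number of lines of $\S$ through the rational double point $P$ by $6$ using Datta's theorem, whereas you obtain the same bound from B\'ezout applied to the forms $f_2$ and $f_3$, whose coprimality follows from absolute irreducibility exactly as you argue; the paper counts affinely and gets $q^2+6q-6$, you count projectively and get $q^2+7q-4$, and either suffices. In case (iii) both you and the paper (which quotes the bound $q^2-q$ from Bonini--Sala) slice $\S$ by the pencil of planes through the $\F_q$-rational line joining the two conjugate singularities; you carry out the analysis of the residual conics explicitly, losing a little in the constant but still landing well under $q^2+7q+1$. In short: same skeleton, but your treatment of (i) and (ii) trades the citations of Weil and Datta for elementary B\'ezout-type counts, which makes the proof more self-contained at no cost to the conclusion.
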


For brevity, in what follows we will sometimes just write irreducible instead of absolutely irreducible, since we will see all the surfaces as elements of $\mathbb{A}^3(\overline{\F}_q)$.

Note that, if $\S$ is irreducible, it cannot be a cone over a reducible plane curve, otherwise it would be reducible as well. 

We will discuss separately (at the end of the section) the cases in which $\S$ is a cone over a smooth absolutely irreducible cubic plane curve or $\S$ is reducible, which are not comprised in Theorem \ref{thm:goalgeneral}; we assume henceforth that $S$ is not a cone over a smooth absolutely irreducible cubic plane curve and consider only the irreducible case. We will also make some observations on the cases in which $\S$ has only three or four isolated singular points.

\subsection{$\S$ irreducible with non-isolated singularities}
\label{sec:non-isolated}
In this subsection we analyze the case in which $\S$ has non-isolated singularities. Note that, if $\S$ has only a finite number of singular points, then these would be necessarily isolated. Therefore, in this case $\S$ has an infinite number of singular points.

We treat separately the case in which the surface is a cone or not. The possibilities (see \cite[Chapter 2, Section 4]{kaplan2013rational} and \cite{roczen1996cubic}) are the following:

\begin{enumerate}[(i)]
\item $\S$ is a cone over a singular absolutely irreducible cubic plane curve
\item $\S$ is not a cone.
\end{enumerate}

\subsubsection{$\S$ is a cone over a singular cubic curve}
\begin{proposition}
Let $\S$ be an irreducible cubic surface defined over $\F_q$. If $\S$ is a cone over a singular absolutely irreducible cubic plane curve, then \begin{equation}
\label{bound:coneoversingularcubic}
|\S (\F_q)|\leq (q+2)q+1=q^2+2q+1.
\end{equation}
\end{proposition}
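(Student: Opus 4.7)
The plan is to exploit the cone structure of $\mathcal{S}$: first I would reduce the point count on $\mathcal{S}$ to a point count on the plane base cubic $\mathcal{C}$ via the ruling, and then bound $|\mathcal{C}(\F_q)|$ using the hypothesis that $\mathcal{C}$ is singular.

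First, the vertex $V$ of $\mathcal{S}$ is $\F_q$-rational: since $\mathcal{S}$ is irreducible and arises as a cone over an irreducible plane cubic, the vertex is unique as a point of $\P^3$, so it must be fixed by the Frobenius automorphism. Choose an $\F_q$-rational plane $H$ avoiding $V$ and let $\mathcal{C}=\mathcal{S}\cap H$. By the cone structure every point of $\mathcal{S}\setminus\{V\}$ lies on a unique ruling line $\overline{VP}$ with $P\in\mathcal{C}$, and such a ruling is $\F_q$-rational if and only if $P\in\mathcal{C}(\F_q)$. Since each rational ruling contributes exactly $q$ further rational points on top of $V$, the cone decomposition yields
\[
|\mathcal{S}(\F_q)| = 1 + q\cdot|\mathcal{C}(\F_q)|.
\]

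It then suffices to prove $|\mathcal{C}(\F_q)|\leq q+2$. By Bezout, an absolutely irreducible singular plane cubic has exactly one singular point: a second one would force the line joining the two to meet $\mathcal{C}$ with intersection multiplicity at least $4$, contradicting $\deg\mathcal{C}=3$. Thus $\mathcal{C}$ has geometric genus $0$ and is birational to $\P^1$ via its normalization. A case analysis on the type of the singularity (cusp, split node, non-split node), using the normalization map to parametrize rational points, gives $|\mathcal{C}(\F_q)|$ equal to $q+1$, $q$, or $q+2$ respectively, with the maximum attained in the non-split nodal case. Substituting the worst case,
\[
|\mathcal{S}(\F_q)| \leq 1+q(q+2) = q^2+2q+1.
\]

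The only delicate step is the classification of rational points on a singular absolutely irreducible plane cubic; everything else is a direct consequence of the ruling decomposition of the cone. This case analysis is classical and can be carried out either through explicit rational parametrizations of the three standard Weierstrass-type models (cuspidal, split nodal, non-split nodal cubic), or by appealing to known point-count formulas for such curves.
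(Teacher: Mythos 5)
Your proof is correct and follows essentially the same route as the paper: decompose the cone into the $\F_q$-rational ruling lines through the vertex to get $|\S(\F_q)|=mq+1$ with $m=|\mathcal{C}(\F_q)|$, then bound $m\leq q+2$ for a singular absolutely irreducible plane cubic (the paper cites Hirschfeld for this last bound, whereas you sketch the cusp/split-node/non-split-node classification explicitly, and you also make explicit the rationality of the vertex, which the paper leaves implicit). No substantive difference in approach.
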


\begin{proof}
Take $P$ the vertex of the cone and choose an arbitrary plane in $\P^3(\F_q)$ not containing $P$. We can think of the cone as the union of the lines between $P$ and a singular absolutely irreducible cubic curve lying on this plane. Hence, if the cubic curve has $m$ $\F_q$-rational points, the obtained cone has $mq+1$ $\F_q$-rational points (see \cite[Chapter 2, Section 4]{kaplan2013rational}). 

This is easy to see as $\F_q$-rational points of the cone lie on lines through $P$ and $\F_q$-rational points of the curve, so we count $q$ point for every line (all $\F_q$-rational points except the vertex $P$) and finally we add $P$. 

Then, due to well-known results, see \cite[Chapter 11]{hirsch:projective} for details, a singular absolutely irreducible cubic curve in $\P^2(\F_q)$ can have at most $q+2$ $\F_q$-rational points, and this yields the thesis.

\end{proof}

\begin{remark}
Note that, counting points as in the previous proposition, we are actually counting projective $\F_q$-rational points, not only the affine ones, so the bound we obtain is not tight; nonetheless, it is compatible with our goal bound which is $q^2+7q+1$.
\end{remark}

\subsubsection{$\S$ is not a cone}

If $\S$ is not a cone and has no isolated singularities, it turns out that Theorem \ref{thm:goalgeneral} is established by results in \cite[Chapter 2, Section 4]{kaplan2013rational} which exploit a theorem of Weil (see also \cite{elkies2006linear}, \cite{weil1958abstract} and \cite[Chapter IV]{manin:cubic} for details).

Weil's theorem (see \cite[Theorem 23]{kaplan2013rational}) gives a suitable bound for our purposes since it ensures that the surface $\S$ has necessarily a number of $\F_q$-rational points that is

\begin{equation}
\label{bound:nonisolatedsing}
q^2+q+1+tq
\end{equation} 

for some $t\in [-3,6]$.

\begin{theorem}[Weil]
Let $\S$ be a surface defined over a finite field $\F_q$. If $S \otimes \overline{\F}_q$ is birationally trivial, then

$$
|S(\F_q)|=q^2+q\mathrm{Tr}(\varphi^{*})+1,
$$
where $\varphi$ denotes the Frobenius endomorphism and $\mathrm{Tr}(\varphi^{*})$ denotes the trace of $\varphi$ in the representation of $\mathrm{Gal}(\overline{\F}_q/\F_q)$ on $\mathrm{Pic}(S \otimes \overline{\F}_q)$. 
\end{theorem}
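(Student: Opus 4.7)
The plan is to derive the count from the Grothendieck--Lefschetz trace formula in $\ell$-adic étale cohomology, taking advantage of the extreme simplicity of the cohomology of a birationally trivial surface. First I would reduce to the case in which $S$ is smooth and projective over $\F_q$, since birational triviality of $S\otimes\overline{\F}_q$ gives the existence of a smooth projective rational model, and then invoke
\[
|S(\F_q)| \;=\; \sum_{i=0}^{4}(-1)^i \mathrm{Tr}\!\bigl(\varphi^{*} \mid H^{i}_{\text{ét}}(S\otimes\overline{\F}_q,\mathbb{Q}_\ell)\bigr).
\]

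The next step is to pin down the individual summands. Since $S \otimes \overline{\F}_q$ is birational to $\mathbb{P}^2$, its Albanese variety is trivial, so $H^1 = 0$; by Poincaré duality $H^3 = 0$ as well. The remaining degrees $H^0 = \mathbb{Q}_\ell$ and $H^4 = \mathbb{Q}_\ell(-2)$ contribute $1$ and $q^2$ respectively, accounting for the $q^2 + 1$ part of the formula. Hence everything reduces to computing $\mathrm{Tr}(\varphi^{*} \mid H^2)$.

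Here I would use that for a smooth projective rational surface the cycle class map
\[
c_1 \colon \mathrm{Pic}(S\otimes\overline{\F}_q) \otimes_{\mathbb{Z}} \mathbb{Q}_\ell \;\longrightarrow\; H^{2}_{\text{ét}}(S\otimes\overline{\F}_q,\mathbb{Q}_\ell(1))
\]
is an isomorphism of Galois modules. Equivalently, $H^{2}_{\text{ét}}(S\otimes\overline{\F}_q,\mathbb{Q}_\ell)$ is $\mathrm{Pic}(S\otimes\overline{\F}_q)\otimes\mathbb{Q}_\ell$ twisted by $\mathbb{Q}_\ell(-1)$, so each Frobenius eigenvalue on $H^2$ is $q$ times a Frobenius eigenvalue on $\mathrm{Pic}$. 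Taking traces yields $\mathrm{Tr}(\varphi^{*} \mid H^2) = q\, \mathrm{Tr}(\varphi^{*})$, and combining with the previous step gives the asserted formula.

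The main obstacle will be establishing the Galois-equivariant cycle class isomorphism. The classical way is by induction on the number of monoidal transformations connecting $S\otimes\overline{\F}_q$ to $\mathbb{P}^2_{\overline{\F}_q}$: for $\mathbb{P}^2$ the statement is immediate, while each blow-up at a closed point adds one generator to the Picard group and one copy of $\mathbb{Q}_\ell(-1)$ to $H^2$, with Galois actions on the two sides matching through the cycle class of the exceptional curve. A secondary technical nuisance is that the birational map to $\mathbb{P}^2$ need not be defined over $\F_q$; but since the argument runs on $S\otimes\overline{\F}_q$ and only the residual action of $\mathrm{Gal}(\overline{\F}_q/\F_q)$ matters, this does not affect the proof.
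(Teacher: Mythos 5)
The paper offers no proof of this statement at all: it is imported verbatim as a classical theorem of Weil, via Manin's book and Kaplan's thesis, and is then only \emph{applied} (to the minimal resolution of the singular cubic surface, viewed as a weak del Pezzo surface of degree $3$). So there is nothing internal to compare against; what you have written is the standard modern proof of the quoted classical result, and in outline it is sound. The Grothendieck--Lefschetz trace formula, the vanishing of $H^1$ and $H^3$ for a smooth projective geometrically rational surface (trivial Albanese plus Poincar\'e duality), and the Galois-equivariant cycle class isomorphism $\mathrm{Pic}(S\otimes\overline{\F}_q)\otimes\mathbb{Q}_\ell\xrightarrow{\ \sim\ }H^{2}(S\otimes\overline{\F}_q,\mathbb{Q}_\ell(1))$ combine to give exactly $q^2+q\,\mathrm{Tr}(\varphi^{*})+1$. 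Your treatment of the field of definition of the birational map is also right in substance: the cycle class map is Galois-equivariant by functoriality no matter where the map to $\mathbb{P}^2$ is defined, and whether it is an isomorphism may be checked after base change to $\overline{\F}_q$, where the factorization into monoidal transformations exists. Two small touch-ups to that induction: the minimal model reached over $\overline{\F}_q$ may be a Hirzebruch surface rather than $\mathbb{P}^2$, and the factorization generally involves contractions as well as blow-ups; since both operations change $\mathrm{Pic}$ and $H^2$ by the same rank-one piece, the argument still closes. Weil's original argument predates \'etale cohomology and runs instead through the geometry of the lines and the action of the Galois group on the $27$-line configuration; your route is cleaner and more general, at the cost of invoking heavier machinery.

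The one step I would genuinely push back on is the opening ``reduction to the case in which $S$ is smooth and projective.'' Smoothness and projectivity are not conclusions you can arrange; they are hypotheses of the theorem as Weil, Manin, and Kaplan state it, and the paper's statement simply omits them. If $S$ is singular --- as the cubic surfaces in this paper actually are --- then replacing $S$ by a smooth projective birational model changes $|S(\F_q)|$, so the formula proved for the model does not transfer back to $S$ without a separate comparison of the two point counts over the exceptional locus (which is precisely the extra step the paper performs, following Kaplan, by passing between the weak del Pezzo surface and its anticanonical model). State smoothness and projectivity as hypotheses up front rather than claiming to reduce to them; with that correction your proof is complete in outline.
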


As noted in \cite[Chapter 2, Section 4]{kaplan2013rational}, this theorem is useful in our case because the surface $\S$ is the anti-canonical model of a degree 3 weak Del Pezzo surface.

As observed, to prove the desired result in this case we need to know that the surface has no isolated singularities. This is established from the classification of cubic surfaces (see \cite{roczen1996cubic} and \cite[proof of Proposition 6.4]{hartshorne1997families}), from which it is actually known even more on the singular locus of such a surface; we remark this in the following proposition.

\begin{proposition}
\label{prop:singline}
Let $\mathcal{S}\subseteq \mathbb{P}^{3}(\mathbb{K})$ be an irreducible cubic surface with non-isolated singularities over a field $\mathbb{K}$ of positive characteristic. Then all the singular points of $\mathcal{S}$ lie on a double line $\ell$. 
\end{proposition}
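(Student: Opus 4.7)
The plan is to split the claim in two: first, show that the singular locus of $\mathcal{S}$ is set-theoretically a single line $\ell$; second, verify that $\ell$ is a double line of $\mathcal{S}$ in the customary sense, i.e.\ that it appears with multiplicity two in the intersection of $\mathcal{S}$ with every plane containing it.

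For the first part, I would write $\mathcal{S}=V(F)$ with $F$ an irreducible cubic form in four variables, so that $\mathrm{Sing}(\mathcal{S})$ is cut out inside $\mathcal{S}$ by the four quadrics $F_{X_0},F_{X_1},F_{X_2},F_{X_3}$. Since $\mathcal{S}$ has non-isolated singularities, $\mathrm{Sing}(\mathcal{S})$ admits an irreducible one-dimensional component $C$. To bound $\deg(C)$, I would intersect with a general hyperplane $H$: the plane cubic $\mathcal{S}\cap H$ is then forced to be singular at each of the $\deg(C)$ points of $C\cap H$. A reduced plane cubic has at most three singular points (attained only by a triangle of three lines), whereas only a non-reduced plane cubic---one containing a double line---has infinitely many. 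Tracking how these configurations vary as $H$ moves through a pencil, and using that $\mathcal{S}$ is irreducible, neither possibility is compatible with a fixed irreducible singular curve $C$ of degree $\geq 2$. Hence $\deg(C)=1$, i.e.\ $C=\ell$ is a line, and the same hyperplane-section argument applied to a generic hyperplane through any candidate isolated singular point $P\notin\ell$ rules out such a $P$.

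For the second part, fix any plane $H$ containing $\ell$. Since $\mathcal{S}$ is an irreducible cubic surface it contains no plane, so $\mathcal{S}\cap H$ is a plane cubic curve having $\ell$ as a component; write $\mathcal{S}\cap H=\ell+D$ with $D$ a conic in $H$. Every point of $\ell$ is singular on $\mathcal{S}$ and hence on $\mathcal{S}\cap H$, but a point of $\ell$ can be singular on $\ell+D$ only if it also lies on $D$. Since $\ell$ has infinitely many points, this forces $\ell\subset D$, and as $D$ has degree two one obtains $D=\ell\cup\ell'$ and $\mathcal{S}\cap H=2\ell+\ell'$, which is precisely the double-line structure asserted.

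The main obstacle is the rigorous execution of the first step---ruling out an irreducible singular curve of degree two or three inside $\mathcal{S}$. The cleanest route, and the one implicitly used by the paper in its references to \cite{roczen1996cubic} and \cite[proof of Proposition 6.4]{hartshorne1997families}, is to invoke the classification of irreducible cubic surfaces with non-isolated singularities in positive characteristic, which enumerates all normal forms and shows that in every case the singular locus is a single line.
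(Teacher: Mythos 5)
Your overall strategy (cutting with general hyperplanes) is the same as the paper's, but the decisive step is left open, and you say so yourself: ruling out an irreducible singular curve of degree $2$ or $3$ inside $\mathcal{S}$. As written, your argument only uses that a \emph{reduced} plane cubic has at most three singular points, which is not enough to exclude $\deg(C)=2$ or $3$, and the sentence about ``tracking how these configurations vary as $H$ moves through a pencil'' is not an argument. The paper closes exactly this gap with a specific tool you did not invoke: Bertini's theorem in the form proved in positive characteristic by Nakai \cite{nakai1950note}, which guarantees that a \emph{general} hyperplane $L$ cuts $\mathcal{S}$ in an \emph{irreducible} cubic curve whose singular points are precisely $\mathrm{Sing}(\mathcal{S})\cap L$. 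An irreducible plane cubic has at most one singular (double) point, so a general hyperplane meets $\mathrm{Sing}(\mathcal{S})$ in at most one point; since a curve of degree $d$ meets a general hyperplane in $d$ points, the one-dimensional singular locus must be a line. Your proposed fallback --- citing the full classification of cubic surfaces --- is a legitimate but much heavier route, and it essentially replaces the proof by a reference, whereas the paper's argument is self-contained apart from Bertini--Nakai. Note also that your guess about what the paper does is not accurate: it cites \cite{roczen1996cubic} and \cite{hartshorne1997families} in the surrounding discussion, but the proof itself does not rest on the classification.

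Two smaller points. Your second part (writing $\mathcal{S}\cap H=\ell+D$ for a plane $H\supseteq\ell$, observing that a point of $\ell$ is singular on $\ell+D$ only if it lies on $D$, and concluding $D=\ell\cup\ell'$, so the section is $2\ell+\ell'$) is correct and in fact more explicit than the paper's treatment, which simply asserts the line is double and excludes multiplicity three by Abhyankar's result that a cubic surface with a triple line is a union of three planes. On the other hand, your disposal of a possible isolated singularity $P\notin\ell$ via ``a generic hyperplane through $P$'' is more delicate than you acknowledge (Bertini for linear systems with a base point needs care); the paper argues instead that the plane spanned by $Q$ and $\ell$ would meet $\mathcal{S}$ in $2\ell+\ell'$, which is smooth at any point off $\ell$ unless that plane is a component of $\mathcal{S}$, contradicting irreducibility. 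You should adopt one of these two repairs --- Bertini--Nakai for the first step and the spanned-plane argument for the isolated point --- to make the proof complete.
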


\begin{proof}
We start by observing that, by a result due to Bertini and proved in positive characteristic by Nakai in \cite{nakai1950note}, a general hyperplane $L$ in $\mathbb{P}^{3}(\mathbb{K})$ cuts on $\mathcal{S}$ an irreducible cubic curve $\mathcal{C}$ whose singular points are exactly the singular points of $\mathcal{S}$ lying on the hyperplane $L$.\\
Since the curve $\mathcal{C}$ is an irreducible cubic curve, it cannot have more than one singular double point, which means that on a general hyperplane must lie at most one singular point of the surface $\mathcal{S}$ (see also \cite[Section 2]{bruce1979classification}). This leads to conclude that  the singular set of $\mathcal{S}$ is a line $\ell$, which is double since $\mathcal{S}$ is a cubic irreducible surface. Indeed, if $\ell$ were a line of multiplicity higher than two, then $\mathcal{S}$ would have to be the union of three (distinct or coincidental) planes through the line $\ell$ (see \cite{abhyankar1960cubic}).

If there were a singular point $Q$ of $\mathcal{S}$ not lying on $\ell$, then the plane through $Q$ and $\ell$ would be a component of $\mathcal{S}$, which is impossible since the surface is irreducible. Hence, there are no singular points of $\mathcal{S}$ not lying on $\ell$ and we have the thesis.       
\end{proof}

\subsection{$\S$ irreducible with isolated singularities}
\label{subsec:isolatedsing}
We investigate now the case in which $\S$ has only isolated singularities.
We first recall that, considering the classification of cubic surfaces in positive characteristic, if $\S$ is an irreducible cubic surface with isolated singularities, which is not a cone over a smooth plane cubic curve, then its singular points are double points, as it is noted in \cite{roczen1996cubic}.

Therefore, we can exploit the following useful theorem:

\begin{theorem}[\cite{coray:singular}]
\label{thm:4singular}
Let $\mathcal{S}\subset\mathbb{P}^3(\mathbb{K})$ be a singular irreducible cubic surface defined on the field $\mathbb{K}$. Let $\bar{\S}=\S(\overline{\mathbb{K}})$ be the surface defined by $\S$ over $\overline{\mathbb{K}}$, the algebraic closure of $\mathbb{K}$. Let $\delta$ be the number of isolated double points of $\bar{\S}$. Then $\delta\le 4$ and $\mathcal{S}$ is birationally equivalent (over $\mathbb{K}$) to
		\begin{enumerate}[(i)]
			\item $\P^2(\mathbb{K})$ if $\delta=1,4$;
			\item a smooth Del Pezzo surface of degree 4 if $\delta=2$;
			\item a smooth Del Pezzo surface of degree 6 if $\delta=3$.
		\end{enumerate}
	\end{theorem}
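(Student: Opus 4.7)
The approach I would take is classical: combine projection from a singular point with the minimal desingularization of $\S$ and Galois descent. The crucial local observation is that if $P, Q$ are two distinct isolated double points of $\S$, then the line $\overline{PQ}$ meets $\S$ with intersection multiplicity at least $2+2 = 4$, exceeding $\deg \S = 3$; hence $\overline{PQ} \subset \S$, so the singularities of $\S$ are pairwise joined by lines on the surface. This single geometric fact underlies the whole argument.

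The bound $\delta \le 4$ would follow from classical configuration arguments on cubic surfaces. The key input is that four coplanar isolated double points are impossible: the plane they span would cut $\S$ in a plane cubic curve carrying four double points, which cannot happen for a cubic plane curve (an irreducible one has arithmetic genus $1$ and at most one node, while the reducible ones achieve at most three nodes in the triangle configuration); such a plane would therefore have to be a component of $\S$, contradicting irreducibility. A pencil-of-planes argument through a line joining two singularities then rules out larger configurations of singularities and yields $\delta \le 4$.

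For the birational models, the argument proceeds case by case. When $\delta = 1$, the unique singular point is Galois-invariant, hence $\mathbb{K}$-rational, and projection from it gives a birational map $\S \dashrightarrow \P^2$ defined over $\mathbb{K}$, since a general line through the node meets $\S$ with multiplicity $2$ at the node plus exactly one simple residual intersection. When $\delta = 4$, projection from any singularity is birational to $\P^2$; here I would use the Galois-invariant divisor formed by the six lines connecting pairs of singularities to realise the map over $\mathbb{K}$ even when the singular locus is a single Galois orbit. When $\delta = 2$ or $3$, I would pass to the minimal desingularization $\tilde{\S} \to \S$, which replaces each $A_1$ node by a $(-2)$-curve and yields a weak Del Pezzo surface; contracting suitable exceptional curves in a Galois-invariant way produces a smooth Del Pezzo model whose degree ($4$ when $\delta = 2$ and $6$ when $\delta = 3$) is computed from $K_{\tilde{\S}}^2 = 3$ after accounting for the contractions.

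The main obstacle will be the Galois descent, especially when the singular points form a single Galois orbit so that no individual projection centre is $\mathbb{K}$-rational. Over $\overline{\mathbb{K}}$ the birational equivalences are essentially classical; the extra content is to identify a $\mathbb{K}$-rational divisor class on $\tilde{\S}$ whose associated morphism realises the required contraction to $\P^2$ or to the smooth Del Pezzo model. The Galois-stable configuration of lines joining pairs of singularities on $\S$ is the crucial input that makes this descent feasible, and assembling the Galois-equivariant contraction carefully is the delicate bookkeeping step of the proof.
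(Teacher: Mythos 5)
First, a point of comparison that matters for this review: the paper does not prove this statement at all. Theorem \ref{thm:4singular} is imported verbatim from Coray--Tsfasman \cite{coray:singular} and used as a black box, so there is no in-paper argument to measure your proposal against. What follows is therefore an assessment of your sketch on its own terms, as a reconstruction of the classical proof.

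Your ingredients are the standard and correct ones (the line through two double points lies on $\S$ because $2+2>3=\deg\S$; four coplanar double points are impossible; projection from a node; crepant resolution followed by a Galois-equivariant contraction of $(-1)$-curves), but two steps are genuinely incomplete rather than merely compressed. (a) The bound $\delta\le 4$ does not follow from the pencil-of-planes argument as you state it: the planes through a line $\ell$ joining two nodes cut out a pencil of residual conics, each further node forces a degenerate member with vertex at that node, and a pencil of conics has at most three degenerate members --- this yields only $\delta\le 2+3=5$. To reach $4$ you need an additional incidence argument, for instance: given five nodes, no three collinear, the line $P_4P_5\subset\S$ must meet the plane of the triangle $P_1P_2P_3$ in a point of one of its edges, which forces four of the nodes to be coplanar, contradicting what you proved. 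Your coplanarity argument itself also silently assumes the plane section is reduced; the non-reduced case (double line plus line), whose singular locus is the whole double line, has to be excluded separately using that the singularities of $\S$ are isolated. (b) The Galois descent, which you correctly identify as the delicate point, is asserted rather than performed. For $\delta=2,3$ the specific Galois-stable $(-1)$-curves to contract are the strict transforms of the lines joining pairs of nodes (a single $\mathbb{K}$-rational line for $\delta=2$; a disjoint Galois-stable triple for $\delta=3$, disjoint because the resolution separates the branches at each node), and one must then verify that the contracted surface of degree $4$, respectively $6$, has ample anticanonical class. For $\delta=4$ with the nodes forming a single Galois orbit, the $\mathbb{K}$-rationality is the hardest part of \cite{coray:singular}, and ``use the Galois-invariant divisor formed by the six lines'' names the relevant object without supplying the argument. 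As written, the proposal is a correct roadmap, not yet a proof.
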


We recall that a smooth Del Pezzo surface is a smooth projective surface $\mathcal{X}$ whose anticanonical class is ample.

In this section, we just focus on the cases in which $\S$ has one singular $\F_q$-rational point or it has two singular points, as these are the cases comprised in Theorem \ref{thm:goalgeneral}.

We recall the following important fact (\cite{BS2018}).

\begin{remark}
\label{rem:rational}
As $\mathcal{S}$ is defined over $\F_q$, if $P\in{\mathcal{S}}(\F_q)$ is a singular point then its conjugates with respect to the Frobenius automorphism are also singular.
\end{remark}

\subsubsection{One singular $\F_{q}$-rational point}
\label{subsec:1sing}
In this subsection we investigate the case in which $\mathcal{S}$ has one singular $\F_q$-rational point. As previously noted (see Remark \ref{rem:rational}), if there is only one singular point $P$ on $\S$, then $P$ is necessarily $\F_{q}$-rational, otherwise its conjugates would be singular points of the surface. 

In case $\S$ has at least one $\F_q$-rational point (which is clearly verified when $\S$ has only one singularity), we can use the following idea to find the desired bound for the number of $\F_{q}$-rational points of $\S$. Note that this reasoning makes use of similar arguments with respect to the secant and tangent process to generate new rational points from old ones, see \cite{segre:ts,manin:cubic,Cooley2014} for a deeper investigation.

Consider the sheaf of $\F_{q}$-rational lines through the $\F_q$-rational point $P$. 

Since $P$ is a double point of the surface, a line $\ell\not\subset\S$ through $P$ can intersect $\S$ in at most one more point, which has to be $\F_{q}$-rational by construction, since we are intersecting two $\F_q$-rational varieties. 
Indeed, the coordinates of the points of intersection correspond to the roots of a cubic equation with coefficients in $\F_{q}$, and this equation has an $\F_{q}$-rational double root, corresponding to $P$. For this reason, also its third root has to be $\F_{q}$-rational.

On the one hand, a direct bound for the number of $\F_{q}$-rational lines through $P$ in the affine space is $q^{2}$. On the other hand, an $\F_{q}$-rational line is a subspace of dimension 1 of $(\F_{q})^{3}$, so it has exactly $q$ points in affine space. Therefore, an $\F_{q}$-rational line $\ell\subset\S$ through $P$ (i.e. $\ell\subset\S(\F_{q})$) has at most $q$ affine points on $\S$. This means that it has, apart from $P$, at most other $q-1$ $\F_{q}$-rational points on the surface.

These reasonings lead to the following bound

\begin{equation}
\label{bound:1singular}
|\S(\F_{q})|\leq q^{2} + h(q-1)
\end{equation}
where $h$ is the number of $\F_q$-rational lines through $P$ contained in $\S$.

\begin{remark}
The argument we have exploited makes possible to find all $\F_{q}$-rational points on the surface $\S$ because every $\F_{q}$-rational point on the surface lies on an $\F_{q}$-rational line through $P$. In fact, the line through $P$ and the $\F_q$-rational point considered is an $\F_q$-rational line ($P$ is indeed $\F_q$-rational). 
\end{remark}

Our aim now is to determine a bound for the value of $h$. For this purpose, we consider a result proved in \cite{datta:sixlines}.

\begin{theorem}[\cite{datta:sixlines}]
\label{thm:sixlines}
Let $Y\subset \P^{3}$ be a surface of degree $d$ defined over $\F_{q}$ and $P\in Y(\F_{q})$. Then one of the following holds:
\begin{itemize}
\item[$(a)$] $Y$ contains a plane defined over $\F_{q}$,
\item[$(b)$] $Y$ contains a cone over a plane curve defined over $\F_{q}$ with center at $P$,
\item[$(c)$] $|\{l\subset \P^{3} \ | \ l \ \mbox{is a line such that} \ P\in  l\subset Y\}|\leq d(d-1)$.
\end{itemize}
\end{theorem}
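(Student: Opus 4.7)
The plan is to project $Y$ from $P$. Choose homogeneous coordinates on $\P^{3}$ so that $P=(0\!:\!0\!:\!0\!:\!1)$, and expand the defining polynomial of $Y$ in powers of $x_{3}$ as
\[
f \;=\; f_{m}(x_{0},x_{1},x_{2})\,x_{3}^{d-m} + f_{m+1}(x_{0},x_{1},x_{2})\,x_{3}^{d-m-1} + \cdots + f_{d}(x_{0},x_{1},x_{2}),
\]
where each $f_{i}$ is homogeneous of degree $i$ over $\F_{q}$ and $m=\mathrm{mult}_{P}(Y)\ge 1$. A line through $P$ is determined by its direction $(a_{0}\!:\!a_{1}\!:\!a_{2})\in\P^{2}$; parametrising as $(ta_{0}\!:\!ta_{1}\!:\!ta_{2}\!:\!s)$ and substituting into $f$ one sees that $\ell\subset Y$ if and only if $f_{i}(a_{0},a_{1},a_{2})=0$ for every $i=m,\dots,d$. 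Hence the set of lines through $P$ contained in $Y$ is naturally identified with the base locus $B:=V(f_{m},\dots,f_{d})\subset\P^{2}$, which is defined over $\F_{q}$.

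I would then split into cases according to $\dim B$. If $\dim B\ge 1$, I would pick an irreducible component $C\subseteq B$ of dimension one defined over $\F_{q}$ (such a component exists because $B$ itself is $\F_{q}$-rational); the cone with vertex $P$ over $C$ lies in $Y$ and is $\F_{q}$-rational, yielding conclusion (a) when $C$ is a line (so the cone is a plane) and conclusion (b) otherwise. If instead $\dim B=0$, I would invoke Bézout in $\P^{2}$: for $m=d$ the polynomial $f$ reduces to $f_{d}$ and $Y$ is itself a cone with apex $P$ over the plane curve $\{f_{d}=0\}$, which is case (b); for $2\le m\le d-1$ one has $B\subseteq V(f_{m})\cap V(f_{m+1})$, and the absence of a common factor (a common factor would force $\dim B\ge 1$, already handled) gives $|B|\le m(m+1)\le d(d-1)$; finally for $m=1$, either $V(f_{1})\subseteq V(f_{i})$ for every $i$, in which case $V(f_{1})\subseteq B$ and we fall back into the positive-dimensional case with the cone being a plane (a), or some restriction $f_{i}|_{V(f_{1})}$ is a nonzero polynomial of degree $i\le d$ on $V(f_{1})\cong\P^{1}$, bounding $|B|$ by $d\le d(d-1)$.

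The hard part will be the $\F_{q}$-rationality bookkeeping in the positive-dimensional case, since the one-dimensional part of $B$ may split over $\overline{\F}_{q}$ into Galois-conjugate components that are permuted by the Frobenius; I would handle this by extracting an $\F_{q}$-irreducible (but possibly not geometrically irreducible) component of $B$ and noting that the cone over it with apex the $\F_{q}$-point $P$ is automatically defined over $\F_{q}$ and sits inside $Y$. A secondary subtlety is ensuring the dichotomy ``common factor $\Leftrightarrow$ positive-dimensional base locus'' is applied correctly across the whole system $\{f_{m},\dots,f_{d}\}$ rather than just a chosen pair: this is clean because any positive-dimensional irreducible component of a pairwise intersection persists inside the full base locus. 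Granted this, the combination of the projection setup and Bézout delivers the trichotomy exactly as stated.
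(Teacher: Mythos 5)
The paper never proves this statement---it is quoted directly from \cite{datta:sixlines} and used as a black box---so your attempt can only be judged against the theorem itself, and there it contains one genuine gap. Your setup (normalising $P=(0:0:0:1)$, identifying the lines through $P$ inside $Y$ with the base locus $B=V(f_m,\dots,f_d)\subset\P^2$, and splitting on $\dim B$) is sound, and the positive-dimensional case, the case $m=d$, and the case $m=1$ are all handled correctly. The problem is the subcase $2\le m\le d-1$ with $\dim B=0$: you get $|B|\le m(m+1)$ from B\'ezout applied to the pair $(f_m,f_{m+1})$, justifying the ``no common factor'' hypothesis by the claim that a positive-dimensional component of $V(f_m)\cap V(f_{m+1})$ would persist inside the full base locus $B$. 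That claim is false as soon as the system contains a third form: take $d=4$, $m=2$, $f_2=x_0^2$, $f_3=x_0x_1^2$, $f_4=x_1^4+x_2^4$; then $f_2$ and $f_3$ share the factor $x_0$, yet $B=V(x_0,\,x_1^4+x_2^4)$ consists of at most four points. In such a situation your case analysis produces no bound at all, so the trichotomy is not established for general $d$.

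The gap is repairable without changing your architecture. When $\dim B=0$, decompose $V(f_m)=\bigcup_j C_j$ into irreducible components over $\overline{\F}_q$; for each $C_j$ there is some index $i$ with $C_j\not\subseteq V(f_i)$ (otherwise $C_j\subseteq B$, contradicting $\dim B=0$), so B\'ezout gives $|B\cap C_j|\le \deg(C_j)\cdot i\le \deg(C_j)\cdot d$, and summing over the components yields $|B|\le md\le d(d-1)$ since $m\le d-1$. (Equivalently, one can replace $f_{m+1}$ by a generic combination $\sum_i\lambda_i\,\ell^{\,d-i}f_i$ of the whole system before invoking B\'ezout.) It is worth noting that in the only instance the paper actually uses---$d=3$ with $P$ a double point, so $m=2$ and the system is exactly the pair $\{f_2,f_3\}$---your pairwise dichotomy is valid and your argument is already complete; the defect only manifests for $d\ge 4$ with $m\le d-2$.
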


Note that this result is very useful for our purposes, since $\S$ is by hypothesis absolutely irreducible and not a cone over a smooth absolutely irreducible cubic plane curve. Moreover, by the classification of cubic surfaces in positive characteristic (see \cite{roczen1996cubic}), we also know that $\S$ cannot contain a cone over a smooth absolutely irreducible cubic plane curve. Since we are supposing that $\S$ has only isolated singularities, it cannot even contain a cone over a non-singular curve, otherwise it would have a double line.
Specializing this theorem to $\S$ and $P$, and recalling that $\S$ is defined by a homogeneous equation of degree 3 with coefficients in $\F_{q}$, we get that 

\[h\leq |\{l\subset \P^{3} \ | \ l \ \mbox{is a line such that} \ P\in  l\subset \S\}|\leq 6\]

that, combined with \eqref{bound:1singular}, gives the following result.

\begin{proposition}
If $\S$ has (at least) one singular $\F_{q}$-rational point then 
\begin{equation}
\label{bound:1singular:6}
|\S_{1}(\F_{q})|\leq q^{2} + 6(q-1)= q^{2} +6q -6.
\end{equation}
\end{proposition}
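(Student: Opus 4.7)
The plan is to combine the two ingredients already set up in the section: the line-counting estimate \eqref{bound:1singular} and Theorem \ref{thm:sixlines}. First I would recall that \eqref{bound:1singular} gives
\[
|\S(\F_q)| \;\leq\; q^{2} + h(q-1),
\]
where $h$ denotes the number of $\F_q$-rational lines of $\S$ passing through the singular $\F_q$-rational point $P$. Thus the proposition reduces to showing $h \leq 6$.

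To bound $h$, I would apply Theorem \ref{thm:sixlines} to $Y = \S$ with $d = 3$ at the point $P$. Its case (c) delivers exactly the bound $h \leq d(d-1) = 6$ that we want, so the work is to exclude cases (a) and (b). Case (a), that $\S$ contains an $\F_q$-rational plane, is immediate: since $\S$ is absolutely irreducible of degree $3$, the presence of a linear factor in its defining equation would contradict irreducibility. Case (b), that $\S$ contains a cone over a plane curve with vertex $P$, is the more delicate exclusion and is where the standing hypotheses enter. By assumption $\S$ is not a cone over a smooth absolutely irreducible cubic plane curve, and the classification of cubic surfaces in positive characteristic (Roczen) rules out even the possibility that $\S$ \emph{contains} a proper such cone. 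The only remaining way (b) could hold is that $\S$ contains a cone over a lower-degree or singular plane curve; but a cone over a non-singular plane curve of degree at least $1$ forces its axis to lie in the singular locus of $\S$, producing a whole line of singularities and contradicting the hypothesis that $\S$ has only isolated singularities.

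Having excluded (a) and (b), Theorem \ref{thm:sixlines}(c) yields $h \leq 6$, and plugging this into \eqref{bound:1singular} gives
\[
|\S(\F_q)| \;\leq\; q^{2} + 6(q-1) \;=\; q^{2} + 6q - 6,
\]
as required. The main obstacle is the careful justification that case (b) of Theorem \ref{thm:sixlines} cannot occur; this rests on both the classification of irreducible cubic surfaces in positive characteristic and on the isolated-singularity hypothesis, so one must be careful that no pathological positive-characteristic configuration slips through (in particular, that a cone over a singular conic, etc., is genuinely forbidden by the constraint that all singularities of $\S$ be isolated).
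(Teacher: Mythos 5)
Your proposal follows essentially the same route as the paper: it combines the line-counting bound \eqref{bound:1singular} with Theorem \ref{thm:sixlines} applied with $d=3$, excluding case (a) by absolute irreducibility and case (b) via the not-a-cone hypothesis, the classification of cubic surfaces in positive characteristic, and the isolated-singularity assumption. The argument and even the justification for ruling out case (b) match the paper's proof, so the proposal is correct and not a genuinely different approach.
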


\subsubsection{Two singular points}
\label{subsec:2sing}
We consider now the case in which $\S$ has exactly two singular points.

Note that, by Remark \ref{rem:rational}, if $P_1$ and $P_2$ are the singular points of $\S$, then either $P_1$ and $P_2$ are both $\F_q$-rational, and in this case we already have the bound determined in the previous subsection \ref{subsec:1sing}, or $P_1$ and $P_2$ are $\F_{q^2}$-rational and conjugates.

We have to investigate this second case, for which turns out that we can adopt the same argument used in \cite{BS2018}, that comes from the investigation of the possible curves obtained from the intersection between $\S$ and the sheaf of planes through the line between the singularities. 

	\begin{proposition}[\cite{BS2018}]
		If $\S$ has two singular $\F_{q^2}$-rational conjugate points then 
		\begin{equation}
		\label{bound:2singular}
		q^2-14q+39\le |\S_1(\F_q)|\le q^2-q.
		\end{equation}
	\end{proposition}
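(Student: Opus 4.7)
My plan is to exploit the pencil of planes through the line $\ell$ joining the two conjugate singular points $P_1, P_2$, in direct analogy with the argument of Subsection \ref{subsec:1sing} but adapted to the Galois-conjugate configuration. Because the Frobenius interchanges $P_1$ and $P_2$, their unique joining line $\ell$ is defined over $\F_q$; moreover, since $P_1, P_2$ are double points of the cubic $\S$, the intersection number $\ell \cdot \S$ is at least $2+2=4$, exceeding $\deg \S = 3$, which forces $\ell \subset \S$. The pencil of planes containing $\ell$ has exactly $q+1$ $\F_q$-rational members, every $\F_q$-rational point of $\S$ off $\ell$ lies in exactly one of them, and the $\F_q$-rational points of $\ell$ itself, being disjoint from the non-$\F_q$-rational $P_1, P_2$, contribute the full complement $\ell(\F_q)$ to $\S(\F_q)$.

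For each $\F_q$-rational plane $\pi$ through $\ell$, the divisor $\pi \cap \S$ is an $\F_q$-rational plane cubic containing $\ell$ and so splits as $\pi \cap \S = \ell + \mathcal{C}_\pi$ with $\mathcal{C}_\pi$ an $\F_q$-rational conic; since $P_1, P_2$ are singular on $\pi \cap \S$, the conic $\mathcal{C}_\pi$ must pass through both. Because any $\F_q$-rational line through $P_1$ is, by Galois invariance, also through $P_2$ and hence equal to $\ell$, the possibilities for $\mathcal{C}_\pi$ reduce to three: (a) a smooth conic, contributing $q+1$ off-$\ell$ rational points; (b) a pair of $\F_{q^2}$-conjugate lines, contributing just the single $\F_q$-rational intersection point; or (c) a degenerate decomposition $\mathcal{C}_\pi = \ell + m$ with $m$ an extra $\F_q$-rational line on $\S$, in which case $\pi \cap \S = 2\ell + m$ and the contribution is at most $q$ further points. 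Writing $s, t, u$ for the number of planes of types (a), (b), (c), with $s+t+u = q+1$, yields an exact formula for $|\S_1(\F_q)|$ in terms of $s, t, u$.

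For the upper bound $|\S_1(\F_q)| \le q^2 - q$ I would invoke Theorem \ref{thm:4singular}: with $\delta = 2$, the surface $\S$ is $\F_q$-birationally equivalent to a smooth Del Pezzo surface of degree $4$, whose $\F_q$-point count is constrained by the Frobenius trace on its rank-$6$ Picard lattice, and the birational comparison transports this constraint down to $\S$. For the lower bound $|\S_1(\F_q)| \ge q^2 - 14q + 39$ I would bound the numbers of type-(b) and type-(c) planes: each extra line $m$ in case (c) is a distinct $\F_q$-rational line on $\S$, and by Theorem \ref{thm:sixlines} (combined with the classification of \cite{roczen1996cubic}) only an absolute number of such lines can occur, so $u$ is bounded by a constant; meanwhile $t$ is controlled by how many planes in the pencil meet $\S$ along an $\F_{q^2}$-conjugate line pair, which is in turn controlled by the arithmetic of the Del Pezzo model. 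The main obstacle, as I see it, is the careful bookkeeping required to transport the point counts between $\S$ and its smooth birational model, keeping track of the exceptional divisors over $P_1, P_2$ and of the contracted line $\ell$; this is precisely where the slack between the Weil-type bound for smooth cubic surfaces and the explicit expressions $q^2 - q$ and $q^2 - 14q + 39$ arises, and where the extremal cases of the stated interval are attained.
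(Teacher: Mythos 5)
Your geometric setup is the same one the paper relies on: the paper gives no independent proof of this proposition but imports it from \cite{BS2018}, describing the method exactly as ``the investigation of the possible curves obtained from the intersection between $\S$ and the sheaf of planes through the line between the singularities.'' Your observations that the line $\ell$ joining the conjugate pair is $\F_q$-rational, that $\ell\subset\S$ because the intersection multiplicity at two double points already exceeds $3$, and that each $\F_q$-rational plane of the pencil cuts out $\ell$ plus a conic through $P_1,P_2$ (with the degeneration types you list, except that you omit the double-line case $\mathcal{C}_\pi=2\ell$) all match that strategy and are correct.

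The gap is that neither stated inequality is actually derived. For the upper bound, the bookkeeping you set up cannot by itself give $q^2-q$: with $s=q+1$ planes of type (a) the pencil count allows up to $(q+1)^2+(q+1)=q^2+3q+2$ rational points, which is larger than $q^2-q$ by roughly $4q$, so some further input is indispensable. You sense this and fall back on Theorem \ref{thm:4singular} and the Frobenius trace on the Picard lattice of the degree-$4$ Del Pezzo model, but that computation --- including the transport of the point count through the birational map, keeping track of the exceptional divisors over $P_1,P_2$ and of $\ell$ --- is precisely what you flag as ``the main obstacle'' and never carry out; a raw trace bound on a rank-$6$ lattice gives $q^2+tq+1$ with $t$ ranging over an interval containing positive values, so without the explicit bookkeeping there is no reason the result lands at $q^2-q$. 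Symmetrically, for the lower bound you assert that $t$ and $u$ are ``bounded by a constant'' via Theorem \ref{thm:sixlines} and the arithmetic of the Del Pezzo model, but the specific constants $14$ and $39$ are never produced. As it stands the proposal is a plausible plan that reproduces the skeleton of the argument in \cite{BS2018}, not a proof of the stated bounds.
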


\begin{remark}
\label{rem:cases1}
The method used for treating this case actually does not rely on the surface $\S$ having exactly two singular points, but just on $\S$ having at least two singular $\F_{q^2}$-rational points. Hence, this case solves also the cases in which the surface has three or four singular points, two of which are $\F_{q^2}$-rational and conjugates.
\end{remark}

\begin{remark}
\label{rem:cases2}
Note that, according to Theorem \ref{thm:4singular} and Remark \ref{rem:rational}, for $\S$ with isolated singularities there are still two other possibilities which are not covered by the results presented so far (and which we have not taken into account here since they are not comprised in Theorem \ref{thm:goalgeneral}). These are 
\begin{itemize}
    \item $\S$ having three $\F_{q^3}$-rational and conjugates singular points;
    \item $\S$ having four $\F_{q^4}$-rational and conjugates singular points.
\end{itemize}
We will discuss these cases in detail in section \ref{sec:isosing:general}, for the special class of cubic surfaces defined in section \ref{2cubic}. However, a trivial bound for these cases is $|\S(\F_q)|\leq 3q^2$. 
\end{remark}

\subsection{Missing cases}

In this final part of the section, we discuss the two remaining cases not comprised in Theorem \ref{thm:goalgeneral}, i.e., $\S$ being a cone over a smooth irreducible cubic plane curve or $\S$ being reducible. We investigate these two cases for completeness, and for the future applications to AG codes.

If the $\F_q$-rational surface $\S$ is a cone over a smooth absolutely irreducible cubic plane curve $\mathcal{E}$, as seen in section \ref{sec:non-isolated}, we have that the $\F_q$-rational points on $\S$ are $mq+1$, where $m$ is the number of $\F_q$-rational points of $\mathcal{E}$.

Since the cubic curve in this case is a smooth irreducible cubic plane curve, by well-known results in \cite[Chapter 11]{hirsch:projective} we know that it has at most $q+2\sqrt{q}+1$ $\F_q$-rational points in $\P^2(\F_q)$; this yields the following result.

\begin{proposition}
Let $\S$ be an $\F_q$-rational cubic surface. If $\S$ is a cone over a smooth absolutely irreducible cubic plane curve $\mathcal{E}$, then 

\begin{equation}
\label{bound:coneoversmoothcubic}
|\S (\F_q)|\leq (q+2\sqrt{q})q+1=q^2+2q\sqrt{q}+1.
\end{equation} 

\end{proposition}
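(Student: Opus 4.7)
The plan is to mimic, almost verbatim, the argument already used in the paper for cones over singular absolutely irreducible plane cubics; the only change is that the elementary upper bound $q+2$ on the rational points of the singular base is replaced by the Hasse--Weil bound for a smooth plane cubic. The geometric reduction by projection from the vertex never uses that the base curve is singular, so it transfers to the smooth case with no modification.

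Concretely, first I would let $P$ denote the vertex of the cone $\S$ and fix an $\F_q$-rational plane $\pi \subset \P^3(\F_q)$ with $P \notin \pi$. Intersecting $\S$ with $\pi$ produces an $\F_q$-isomorphic copy of $\mathcal{E}$ inside $\pi$, and projection from $P$ identifies $\S \setminus \{P\}$ with the union of the lines joining $P$ to points of this plane section. Restricting to $\F_q$-points, every $\F_q$-rational line through $P$ contained in $\S$ corresponds to a unique $\F_q$-point of $\mathcal{E} \cap \pi$ and contributes exactly $q$ non-vertex $\F_q$-points to $\S$; summing and adding back the vertex yields the identity $|\S(\F_q)| = mq + 1$, with $m := |\mathcal{E}(\F_q)|$, exactly as in the preceding proposition for the singular case.

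Second, since $\mathcal{E}$ is a smooth absolutely irreducible plane cubic, it has geometric genus $1$, and the Hasse--Weil bound (in the form cited from Hirschfeld, Chapter 11, for plane cubics) gives $m \le q + 2\sqrt{q} + 1$. Substituting into $|\S(\F_q)| = mq + 1$ and simplifying produces the displayed estimate. The substantive input is the Hasse--Weil bound for $\mathcal{E}$; the first step is a direct transcription of the cone argument, so the only point deserving care is verifying that the projection from $P$ yields a genuine bijection between $\F_q$-rational lines of $\S$ through $P$ and $\F_q$-rational points of $\mathcal{E} \cap \pi$. This is immediate from the cone structure (every point of $\S$ lies on a line through $P$, and an $\F_q$-rational point determines an $\F_q$-rational line), so no real obstacle is expected.
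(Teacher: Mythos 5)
Your proof follows exactly the paper's argument: project from the vertex to obtain $|\S(\F_q)| = mq+1$ with $m = |\mathcal{E}(\F_q)|$, then bound $m$ by the Hasse--Weil bound for a smooth plane cubic, so the approach is the same. One arithmetic caveat, which the paper's own proof shares: substituting $m \le q + 2\sqrt{q} + 1$ into $mq+1$ gives $q^2 + 2q\sqrt{q} + q + 1$, not the displayed $q^2 + 2q\sqrt{q} + 1$, so the stated inequality does not literally follow from the bound you (and the paper) invoke --- it would require $m \le q + 2\sqrt{q}$ --- and your claim that the substitution ``produces the displayed estimate'' silently drops the extra $+q$ term.
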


Finally, if $\S$ is reducible, then three possible situations can happen:
\begin{enumerate}[(i)]
\item $\S$ is the union of a non-singular quadric surface and a plane;
\item $\S$ is the union of a quadric cone and a plane;
\item $\S$ is the union of three planes.
\end{enumerate}

In the worst possible case, which corresponds to the case of a complete reducibility of the surface in three $\F_q$-rational planes,
we obtain the trivial bound 
\[
|\S(\F_q)|\leq 3q^2.
\]
For the other cases this bound still holds, but it is possible to refine it, for instance considering well-known results on points on quadric surfaces which can be found in \cite[Chapter 15]{hirsch:finite}.

\section{Cubic surfaces from intersections of algebraic curves}
\label{2cubic}

We start now to investigate the intersection over $\mathbb{F}_{q^{3}}$ of the Norm--Trace curve $\mathcal{N}_{3}$ with the curve defined by 
$$y=\mathcal{A}(x)=A_{3}x^3+A_{2}x^{2}+A_{1}x+A_{0}$$
where $A_{3}\not = 0$ and $A_{i}\in \mathbb{F}_{q^{3}}$ for $i=0,1,2,3$.

As already recalled, when we refer to a planar intersection (or simply intersection) of two curves lying in the affine space $\mathbb{A}^{2}(\F_{q^{3}})$, we mean the number of points in $\mathbb{A}^{2}(\F_{q^{3}})$ lying in both curves, disregarding multiplicity (see \cite{BS2018}).
For the remaining part of this section, we exploit the same approach used in \cite{BS2018} to set the problem.

From now on, we will write $\mathrm{N}$ and $\mathrm{T}$ instead of $\mathrm{N}_{\F_q}^{\F_{q^3}}$  and $\mathrm{T}_{\F_q}^{\F_{q^3}}$, respectively. Moreover, throughout the paper we will always consider the curves (resp. surfaces) in the algebraic closure $\overline{\F}_q$ of $\F_q$, even when not stated explicitly. When we will need to consider smaller fields, we will point it out in the tractation.

Substituting $y=\mathcal{A}(x)$ in the equation of $\mathcal{N}_{3}$, and exploiting the linearity of the trace function, we get

\begin{equation}
\label{normtrace:y=A(x)}
\mathrm{N}(x)=\mathrm{T}(A_{3}x^3)+\mathrm{T}(A_{2}x^{2})+\mathrm{T}(A_{1}x)+\mathrm{T}(A_{0})
\end{equation}

Consider now a normal basis $\mathcal{B}=\{\alpha,\alpha^q,\alpha^{q^{2}}\}$, for a suitable $\alpha\in\F_{q^3}$. We know that such a basis exists, see \cite[Theorem 2.35]{lidl:ff}. The vector space isomorphism 

$$\Phi_{\mathcal{B}}:(\F_{q})^{3} \longrightarrow \F_{q^{3}}$$
$$\Phi_{\mathcal{B}}((s_{0},s_{1},s_{2}))=s_{0}\alpha_{0}+s_{1}\alpha^q+s_{2}\alpha^{q^2}$$
allows us to read the norm and the trace as maps from $(\F_{q})^{3}$ to $\F_{q}$, considering $\widetilde{\text{N}}=\text{N}\circ\Phi_{\mathcal{B}}$ and $\widetilde{\text{T}}=\text{T}\circ\Phi_{\mathcal{B}}$.
Let $\text{T}_i:=\text{\text{T}}(A_ix^i)$  and $\widetilde{\text{T}}_i:=\text{T}_i\circ\Phi_{\mathcal{B}}$, for $1\le i\le 3$, then it is readily seen that $\widetilde{\text{N}}$ and $\widetilde{\text{T}}_i$ are homogeneous polynomials of degree respectively $3$ and $i$ in $\F_q[x_0,x_1,x_2]$, $i=0,1,2,3$.

Therefore, we can rewrite \eqref{normtrace:y=A(x)} as
	\begin{equation}
	\label{eq:sup}
	\widetilde{\text{N}}(x_0,x_1,x_2)=\widetilde{\text{T}}_3(x_0,x_1,x_2)+\widetilde{\text{T}}_2(x_0,x_1,x_2)+\widetilde{\text{T}}_1(x_0,x_1,x_2)+E
	\end{equation}
where $E=\text{T}(A_0)$.
Equation \eqref{eq:sup} is the equation of a hypersurface of $\mathbb{A}^3(\overline{\F}_q)$, where $\overline{\F}_q$ is the algebraic closure of $\F_q$, and both RHS and LHS have degree 3.

All the considerations made so far lead to the fact that $\Phi_{\mathcal{B}}^{-1}$ induces a correspondence between $\F_{q^{3}}[x]$ and $\F_{q}[x_0,x_1,x_2]$, allowing us to substitute $x$ with $x_0\alpha+x_1\alpha^{q}+x_2\alpha^{q^{2}}$. 

We exploit this relation to write the explicit equation of the surface defined by equation \eqref{eq:sup}. 

For simpler notations, from now on we consider the equation of the cubic curve $y=\mathcal{A}(x)$ to be written as 
$$y=Ax^3+Bx^2+Cx+D, \quad A=A_3,B=A_2,C=A_1,D=A_0.$$

We have

	\[
	\begin{split}
	\widetilde{\text{T}}_1&=C(x_0\alpha+x_1\alpha^q+x_2\alpha^{q^2})+C^q(x_0\alpha^q+x_1\alpha^{q^2}+x_2\alpha)+C^{q^2}(x_0\alpha^{q^2}+x_1\alpha+x_2\alpha^{q})\\
	&=x_0\text{T}(\alpha C)+x_1\text{T}(\alpha C^{q^2})+x_2\text{T}(\alpha C^q),
	\end{split}
	\]
	
	\[
	\begin{split}
	\widetilde{\text{T}}_2=&B(x_0\alpha+x_1\alpha^{q}+x_2\alpha^{q^2})^2+B^q(x_0\alpha^q+x_1\alpha^{q^2}+x_2\alpha)^{2}+B^{q^2}(x_0\alpha^{q^2}+x_1\alpha+x_2\alpha^{q})^{2}\\
	&=x_0^2\text{T}(B\alpha^2)+x_1^2\text{T}(B\alpha^{2q})+x_2^2\text{T}(B\alpha^{2q^2})+2x_0x_1\text{T}(B\alpha^{q+1})+2x_0x_2\text{T}(B\alpha^{q^2+1})\\
	&+2x_1x_2\text{T}(B\alpha^{q^2+q}),\\
	\end{split}
	\]
	
	\[
	\begin{split}
	\widetilde{\text{T}}_3=&A(x_0\alpha+x_1\alpha^{q}+x_2\alpha^{q^2})^3+A^q(x_0\alpha^q+x_1\alpha^{q^2}+x_2\alpha)^{3}+A^{q^2}(x_0\alpha^{q^2}+x_1\alpha+x_2\alpha^{q})^{3}\\
	&=x_0^3\text{T}(A\alpha^3)+x_1^3\text{T}(A\alpha^{3q})+x_2^3\text{T}(A\alpha^{3q^2})+3x_0^2x_1\text{T}(A\alpha^{q+2})+3x_0^2x_2\text{T}(A\alpha^{q^2+2})\\
	&+3x_1^2x_2\text{T}(A\alpha^{q^2+2q})+3x_0x_1^2\text{T}(A\alpha^{1+2q})+3x_0x_2^2\text{T}(A\alpha^{1+2q^{2}})+3x_1x_2^2\text{T}(A\alpha^{q+2q^{2}}),\\
	\end{split}
	\]	
	
	\[
	\begin{split}
	\widetilde{\text{N}}=&(x_0\alpha^{q^2}+x_1\alpha+x_2\alpha^{q})(x_0\alpha^q+x_1\alpha^{q^2}+x_2\alpha)(x_0\alpha+x_1\alpha^{q}+x_2\alpha^{q^2})\\
	&=(x_0^3+x_1^3+x_2^3)\text{N}(\alpha)+(x_0^2x_1+x_1^2x_2+x_2^2x_0)\text{T}(\alpha^{q+2})\\
	&+(x_0^2x_2+x_1^2x_0+x_2^2x_1)\text{T}(\alpha^{2q+1})+x_0x_1x_2(3\text{N}(\alpha)+\text{T}(\alpha^3)).\\
	\end{split}
	\]

Hence, we are now able to rewrite \eqref{eq:sup} as 

	\begin{equation}
	\label{eq:sup1}
	\begin{split}
	0=&-(x_0^3+x_1^3+x_2^3)\text{N}(\alpha)-(x_0^2x_1+x_1^2x_2+x_2^2x_0)\text{T}(\alpha^{q+2})-
	(x_0^2x_2+x_1^2x_0+x_2^2x_1)\text{T}(\alpha^{2q+1})\\&
	-x_0x_1x_2(3\text{N}(\alpha)+\text{T}(\alpha^3))
	+x_0^3\text{T}(A\alpha^3)+x_1^3\text{T}(A\alpha^{3q})+x_2^3\text{T}(A\alpha^{3q^2})\\&+3x_0^2x_1\text{T}(A\alpha^{q+2})+3x_0^2x_2\text{T}(A\alpha^{q^2+2})+3x_1^2x_2\text{T}(A\alpha^{q^2+2q})\\&+3x_0x_1^2\text{T}(A\alpha^{1+2q})+3x_0x_2^2\text{T}(A\alpha^{1+2q^{2}})
	+3x_1x_2^2\text{T}(A\alpha^{q+2q^{2}})\\&
	+x_0^2\text{T}(B\alpha^2)+x_1^2\text{T}(B\alpha^{2q})+x_2^2\text{T}(B\alpha^{2q^2})+2x_0x_1\text{T}(B\alpha^{q+1})+2x_0x_2\text{T}(B\alpha^{q^2+1})\\
	&+2x_1x_2\text{T}(B\alpha^{q^2+q})+x_0\text{T}(\alpha C)+x_1\text{T}(\alpha C^{q^2})+x_2\text{T}(\alpha C^q)+E.
	\end{split}
	\end{equation} 
Let $\S_1$ be the surface defined by equation \eqref{eq:sup1}, and observe that it is defined over the field $\F_q$. We will denote with $\S_1(\F_q)$ the set of its affine $\F_q$-rational points.

\begin{remark}
\label{rem:intersection}
By construction, $\F_q$-rational points of $\S_1$ correspond to the intersections in $\mathbb{A}^2(\F_{q^3})$ between $\mathcal{N}_3$ and the curve $y=Ax^3+Bx^2+Cx+D$. 
In other words, our algebraic manipulations proved that there exists $x\in\F_{q^3}$ such that $\mathrm{N}(x)=\mathrm{T}(Ax^3+Bx^2+Cx+D)$ if and only if exists $(x_0,x_1,x_2)\in(\F_q)^3$ satisfying \eqref{eq:sup1} and its representation on the chosen normal basis is $x=x_0\alpha+x_1\alpha^q+x_2\alpha^{q^2}$. 
\end{remark}

Note that equation \eqref{eq:sup1} can be also rewritten as
	\begin{equation*}
	\begin{split}
	0=&-(x_0\alpha+x_1\alpha^q+x_2\alpha^{q^2})(x_0\alpha^q+x_1\alpha^{q^2}+x_2\alpha)(x_0\alpha^{q^2}+x_1\alpha+x_2\alpha^q)\\&
	+A(x_0\alpha+x_1\alpha^q+x_2\alpha^{q^2})^3+A^q(x_0\alpha^q+x_1\alpha^{q^2}+x_2\alpha)^{3}+A^{q^2}(x_0\alpha^{q^2}+x_1\alpha+x_2\alpha^q)^{3}\\&
	+B(x_0\alpha+x_1\alpha^q+x_2\alpha^{q^2})^2+B^q(x_0\alpha^q+x_1\alpha^{q^2}+x_2\alpha)^{2}+B^{q^2}(x_0\alpha^{q^2}+x_1\alpha+x_2\alpha^q)^{2}\\&
	+C(x_0\alpha+x_1\alpha^q+x_2\alpha^{q^2})+C^q(x_0\alpha^q+x_1\alpha^{q^2}+x_2\alpha)+C^{q^2}(x_0\alpha^{q^2}+x_1\alpha+x_2\alpha^q)+E.
	\end{split}
	\end{equation*}

Consider now the affine change of variables in $\mathbb{A}^3(\overline{\F}_q)$ defined by 
$$\psi(x_0,x_1,x_2)=M(x_0,x_1,x_2)^t=(X_0,X_1,X_2)^t$$ where $M$ is the non-singular matrix
	\[
	M=\begin{pmatrix}
	\alpha&\alpha^q&\alpha^{q^2}\\
	\alpha^q&\alpha^{q^2}&\alpha\\
	\alpha^{q^2}&\alpha&\alpha^q
	\end{pmatrix}
	\]  
	and let $\S_2$ be the surface obtained from $\S_1$ through $\psi$ (i.e. $\S_2=\psi(\S_1)$).
\begin{proposition}
\label{rem:correspondence}
$\S_2$ is a surface defined over $\F_{q^3}$, has equation
	\begin{equation}\notag
	\label{eq:sup2}
	X_0X_1X_2=AX_0^3+A^qX_1^3+A^{q^2}X_2^3+BX_0^2+B^qX_1^2+B^{q^2}X_2^2+CX_0+C^qX_1+C^{q^2}X_2+E
	\end{equation}
	and preserves the multiplicities of the points and of the components of $\S_1$. Moreover, the points on $\S_2$ of the form $(\beta,\beta^q,\beta^{q^2})$, $\beta\in\F_{q^3}$, are in bijection with the $\F_q$-rational points on $\S_1$.
\end{proposition}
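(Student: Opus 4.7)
The plan is to verify the three assertions in turn: the explicit equation of $\S_2$, the preservation of multiplicities under $\psi$, and the bijection between the ``twisted-diagonal'' points of $\S_2$ and $\S_1(\F_q)$.

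For the equation, I would observe that the rewritten form of the defining polynomial of $\S_1$ (the second display in this subsection) expresses the left-hand side as a polynomial $F$ evaluated on the three linear forms
\begin{align*}
L_0(x_0,x_1,x_2) &= x_0\alpha+x_1\alpha^q+x_2\alpha^{q^2},\\
L_1(x_0,x_1,x_2) &= x_0\alpha^q+x_1\alpha^{q^2}+x_2\alpha,\\
L_2(x_0,x_1,x_2) &= x_0\alpha^{q^2}+x_1\alpha+x_2\alpha^q,
\end{align*}
and these are precisely the coordinates of $\psi(x_0,x_1,x_2)$. Since $\psi$ is invertible, a point $(X_0,X_1,X_2)$ lies on $\S_2=\psi(\S_1)$ if and only if $F(X_0,X_1,X_2)=0$, which reads exactly as the equation in the statement. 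The coefficients $A,A^q,A^{q^2},B,\dots,E$ all lie in $\F_{q^3}$, so $\S_2$ is defined over $\F_{q^3}$.

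The preservation of multiplicities is then immediate: the matrix $M$ has non-zero determinant because $\{\alpha,\alpha^q,\alpha^{q^2}\}$ is a basis of $\F_{q^3}/\F_q$, so $\psi$ is an affine isomorphism of $\mathbb{A}^3(\overline{\F}_q)$ and induces isomorphisms on the local rings at corresponding points. Multiplicities of irreducible components and multiplicities at singular points are therefore transported unchanged from $\S_1$ to $\S_2$.

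The only delicate step is the bijection. In the forward direction, given $(x_0,x_1,x_2)\in(\F_q)^3$ on $\S_1$, set $\beta:=L_0(x_0,x_1,x_2)\in\F_{q^3}$; using $x_i^q=x_i$ one checks directly that $\beta^q=L_0^q=L_1$ and $\beta^{q^2}=L_0^{q^2}=L_2$, so $\psi(x_0,x_1,x_2)=(\beta,\beta^q,\beta^{q^2})$. Conversely, given $(\beta,\beta^q,\beta^{q^2})\in\S_2$, set $(x_0,x_1,x_2):=\psi^{-1}(\beta,\beta^q,\beta^{q^2})\in\overline{\F}_q^{\,3}$. Raising the identity $\beta=x_0\alpha+x_1\alpha^q+x_2\alpha^{q^2}$ to the $q$-th power gives
\[
\beta^q=x_0^q\alpha^q+x_1^q\alpha^{q^2}+x_2^q\alpha,
\]
and comparing with $\beta^q=L_1(x_0,x_1,x_2)=x_0\alpha^q+x_1\alpha^{q^2}+x_2\alpha$, uniqueness of representation in the normal basis $\{\alpha,\alpha^q,\alpha^{q^2}\}$ forces $x_i^q=x_i$ for each $i$, whence $(x_0,x_1,x_2)\in(\F_q)^3$; the relation $\psi^{-1}(\S_2)=\S_1$ then places the point on $\S_1$. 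The two constructions are mutually inverse, which gives the desired bijection and completes the proof plan.
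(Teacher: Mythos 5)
Your overall route is the same as the paper's (the paper simply asserts these are ``straightforward computations'' using that $M$ is non-singular), and the first two parts of your argument are fine: the equation of $\S_2$ falls out of substituting the linear forms $L_0,L_1,L_2$, and preservation of multiplicities follows from $\psi$ being an affine isomorphism because $\det M\neq 0$ for a normal basis. The forward direction of the bijection is also correct.

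There is, however, a genuine flaw in your converse direction. You compare the single identity $\beta^q=x_0^q\alpha^q+x_1^q\alpha^{q^2}+x_2^q\alpha$ with $\beta^q=x_0\alpha^q+x_1\alpha^{q^2}+x_2\alpha$ and invoke ``uniqueness of representation in the normal basis'' to conclude $x_i^q=x_i$. But uniqueness of coordinates with respect to $\{\alpha,\alpha^q,\alpha^{q^2}\}$ only holds for coefficients in $\F_q$, which is exactly what you are trying to prove; a priori $x_i\in\overline{\F}_q$, and three elements of $\overline{\F}_q$ can never be linearly independent over $\overline{\F}_q$, so a single linear equation in $(z_0,z_1,z_2)$ over $\overline{\F}_q$ has a two-dimensional solution set and does not pin down the coefficients. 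The repair is immediate but requires using all three conjugate relations at once: applying Frobenius to each of the identities $L_0(x)=\beta$, $L_1(x)=\beta^q$, $L_2(x)=\beta^{q^2}$ (and using $\beta^{q^3}=\beta$, valid since $\beta\in\F_{q^3}$) shows that the vector $(x_0^q,x_1^q,x_2^q)$ satisfies the same full linear system $Mz=(\beta,\beta^q,\beta^{q^2})^t$ as $(x_0,x_1,x_2)$; invertibility of $M$ then forces $x_i^q=x_i$, i.e.\ $x_i\in\F_q$. With that substitution your proof is complete.
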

\begin{proof}
These properties come from straightforward computations combined with the fact that $M$ is a non-singular affine transformation.
\end{proof}

In order to estimate the intersection between $\mathcal{N}_3$ and $y=Ax^3+Bx^2+Cx+D$, our goal is the determination of an upper bound for the number of (affine) $\F_q$-rational points of $\mathcal{S}_1$.

The final goal of this paper is to prove the following theorem:

\begin{theorem}
\label{thm:goal}
Consider the $\F_q$-rational cubic surface $\S_1$ associated to the intersections between $\mathcal{N}_3$ and $y=Ax^3+Bx^2+Cx+D$, $A\neq 0$. If $\S_1$ is absolutely irreducible and it is not a cone over an irreducible smooth plane cubic curve, then
		\[
		|\S_1(\F_q)|\le q^2+7q+1.
		\]
\end{theorem}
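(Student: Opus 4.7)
The plan is to reduce Theorem~\ref{thm:goal} to Theorem~\ref{thm:goalgeneral} by enumerating the possible geometric types of $\S_1$ and treating by hand only the residual configurations that fall outside the scope of the general statement. Since by hypothesis $\S_1$ is absolutely irreducible and not a cone over a smooth irreducible plane cubic, Theorem~\ref{thm:4singular} tells us that its singular locus is either non-isolated or consists of at most four double points; I would split the argument accordingly.

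If $\S_1$ is smooth, Theorem~\ref{thm:weil} yields exactly $|\S_1(\F_q)|=q^2+\eta q+1$ with $\eta\le 7$, and we are done. If $\S_1$ has non-isolated singularities, the subsection on non-isolated singularities applies: either $\S_1$ is a cone over a singular absolutely irreducible cubic plane curve, in which case \eqref{bound:coneoversingularcubic} gives $q^2+2q+1$, or $\S_1$ is not a cone and Weil's birational-triviality bound \eqref{bound:nonisolatedsing} gives $q^2+q+1+tq$ with $t\le 6$. If $\S_1$ has a unique isolated double point, Remark~\ref{rem:rational} forces this point to be $\F_q$-rational, and \eqref{bound:1singular:6} gives $q^2+6q-6$. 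If $\S_1$ has exactly two double points, they are either both $\F_q$-rational (which reduces to the previous case) or a conjugate pair over $\F_{q^2}$, in which case \eqref{bound:2singular} gives $q^2-q$. In every one of these situations the bound $q^2+7q+1$ is comfortably satisfied.

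The two residual configurations, singled out in Remark~\ref{rem:cases2}, are three double points forming a Galois orbit over $\F_{q^3}$ and four double points forming a Galois orbit over $\F_{q^4}$. These are not covered by Theorem~\ref{thm:goalgeneral} and must be handled using the very specific arithmetic shape of $\S_1$. My plan is to exploit the Frobenius-invariance built into the coefficients of equation~\eqref{eq:sup1}, equivalently the fact that $\S_2$ has the symmetric form given in Proposition~\ref{rem:correspondence}, to constrain the singular locus strongly. First I would treat the subfamily $B=C=0$ in section~\ref{B=C=0} via an explicit factorisation of the defining polynomial over $\F_{q^3}$; then I would extend to general $B,C$ in section~\ref{sec:isosing:general} by a case analysis of the Galois orbits of the singular points intersected with the secant-and-tangent construction of subsection~\ref{subsec:1sing}.

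The main obstacle will be the four-conjugate-singular-points case. Theorem~\ref{thm:4singular} guarantees that over $\overline{\F}_q$ the surface is birationally equivalent to $\P^2$, but since no singular point is $\F_q$-rational the secant-and-tangent argument that controlled the one-singular-point case is not directly available, and one must instead match the Galois action on the singular locus (of order $4$) against the Frobenius structure (of order $3$) of the coefficients to rule out configurations producing more than $q^2+7q+1$ rational points. This compatibility is precisely where the bound can fail in full generality, and it is the rigidity coming from the norm-trace origin of $\S_1$ that will allow us to close the case.
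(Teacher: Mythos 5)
Your reduction of the statement to Theorem~\ref{thm:goalgeneral} is sound and matches the paper exactly for the smooth case, the non-isolated-singularity case, the single $\F_q$-rational double point, and the conjugate $\F_{q^2}$-rational pair; you also correctly identify that the entire new content of Theorem~\ref{thm:goal} lies in the two residual configurations of Remark~\ref{rem:cases2}, namely a Galois orbit of three $\F_{q^3}$-rational double points and a Galois orbit of four $\F_{q^4}$-rational double points. The problem is that for precisely these two cases you offer only a strategy sketch, and the sketch is not the one that works. You propose to ``match the Galois action on the singular locus against the Frobenius structure of the coefficients'' and to fall back on the secant-and-tangent construction of subsection~\ref{subsec:1sing}; but that construction requires an $\F_q$-rational singular point as the base of the pencil of lines, which by hypothesis does not exist in either residual case, and no mechanism is given by which the order-$3$ versus order-$4$ ``mismatch'' would actually rule out a surplus of rational points. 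An explicit factorisation in the subfamily $B=C=0$ also cannot close the general case, and in the paper that subsection serves mainly to locate singular points (in particular to exclude singularities at infinity), not to prove the bound.

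What the paper actually does, and what is missing from your proposal, is a normal-form-plus-Cremona argument. For three conjugate double points one shows they are not collinear, places them at coordinate vertices to obtain the form $r_0r_1r_2+\beta r_0r_1+\beta^qr_1r_2+\beta^{q^2}r_0r_2+\gamma r_0+\gamma^qr_1+\gamma^{q^2}r_2=0$ (Proposition~\ref{prop:coordchange}), and applies the standard Cremona involution based at the three singular points to land on an irreducible quadric; the points of the special shape $(\delta,\delta^q,\delta^{q^2})$ then biject with the $\F_q$-rational points of an $\F_q$-rational quadric, giving $q^2+\eta q+1$ with $\eta\in\{0,1,2\}$. For four conjugate double points one shows no three are collinear and the four are not coplanar, takes them as a projective frame to reduce the equation to $x_0x_1x_2+\beta_0x_0x_1+\beta_1x_1x_2+\beta_2x_0x_2=0$ (Proposition~\ref{prop:coordchange4}), and the same Cremona map sends the surface onto a plane, so the relevant point count is at most $q^2$. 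Without these constructions (or a worked-out substitute), the two cases that distinguish Theorem~\ref{thm:goal} from Theorem~\ref{thm:goalgeneral} remain unproved, so the proposal as it stands has a genuine gap at its central point.
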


\begin{remark}
Note that the results in \cite{BS2018} prove Theorem \ref{thm:goal} under the assumption $A=0$. 
\end{remark}

Considering the discussion in section \ref{sec:general}, to prove Theorem \ref{thm:goal} we are left with some of the cases in which $\S_1$ has only three or four singular points to treat, so from now on we focus on these cases. However, we first treat separately the particular case $B=C=0$. 
Studying this case is interesting since it gives explicit information on the reducibility of $\S_1$, depending on its coefficients and the base field. Moreover, in the case $\mbox{char}(\mathbb{F}_{q})=3$, we find explicitly the form of the singular points that $\S_1$ can have, which is not possible for the general case.

\section{$\S_1$ irreducible with isolated singularities: case $B=C=0$}
\label{B=C=0}

In this section we will take in account the special case in which $\S_1$ is irreducible, has only isolated singularities, and $B=C=0$.

We investigate the singular points of $\S_2$, which is equivalent to studying singular points of $\S_1$, thanks to the map $\psi$ defined in precedence.
For this case of study, the equation of $\S_2$ is 
$$X_{0}X_{1}X_{2}= AX_{0}^{3}+A^{q}X_{1}^{3}+A^{q^{2}}X_{2}^{3}+E$$
and its affine singular points are the solutions to the following system:

\begin{equation}
\label{initial:system}
\begin{cases}
X_{0}X_{1}X_{2}= AX_{0}^{3}+A^{q}X_{1}^{3}+A^{q^{2}}X_{2}^{3}+E\\
X_{1}X_{2}=3AX_{0}^{2}\\
X_{0}X_{2}=3A^{q}X_{1}^{2}\\
X_{0}X_{1}=3A^{q^{2}}X_{2}^{2}\\
\end{cases}
\end{equation} 

From this system of equations, it is immediately clear that $E=0$ if and only if $(0,0,0)$ is a solution.

We distinguish now three different cases, depending on the characteristic of the field $\F_q$.

\begin{proposition}
Let $B=C=0$, and $\mathrm{char}(\mathbb{F}_{q})=3$. Then the only possible singularities of $\S_2$ are the following
\begin{itemize}
\item $(0,0,0)$ if and only if $E=0$;
\item $\left(0,0,-\frac{\varepsilon}{A^{\frac{q^{2}}{3}}}\right)$, $\left(0,-\frac{\varepsilon}{A^{\frac{q}{3}}},0\right)$ and $\left(-\frac{\varepsilon}{A^{\frac{1}{3}}},0,0\right)$ if $E\neq 0$ and $\varepsilon$ solution of $X_{2}^{3}=-\frac{E}{A^{q^{2}}}$.
\end{itemize}
\end{proposition}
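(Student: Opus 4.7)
The plan is to analyze the singular locus of $\mathcal{S}_2$ via the Jacobian ideal of $F := X_0X_1X_2 - AX_0^3 - A^qX_1^3 - A^{q^2}X_2^3 - E$, and exploit a dramatic simplification that occurs in characteristic $3$: the coefficients $3A, 3A^q, 3A^{q^2}$ appearing in the partial derivatives of the cube terms are identically zero. Consequently, the affine singular-point system \eqref{initial:system} reduces to
\[
X_1X_2 = 0, \qquad X_0X_2 = 0, \qquad X_0X_1 = 0,
\]
coupled with the surface equation $F=0$.

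From these three monomial equations one reads off, by inspection, that at most one of $X_0, X_1, X_2$ can be nonzero: the vanishing of all three pairwise products forces two of the three coordinates to be zero. This splits the analysis into exactly four disjoint sub-cases — either $(X_0,X_1,X_2)=(0,0,0)$, or precisely one coordinate is nonzero.

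Next I would plug each sub-case into the surface equation. The origin $(0,0,0)$ lies on $\mathcal{S}_2$ iff $E=0$, which accounts for the first bullet. For the sub-case $X_0 \neq 0$, $X_1=X_2=0$, the equation collapses to $AX_0^3 + E = 0$, i.e.\ $X_0^3 = -E/A$; since $A \neq 0$, this is compatible with $X_0 \neq 0$ precisely when $E \neq 0$. Because $\mathrm{char}(\mathbb{F}_q)=3$ the Frobenius map $x \mapsto x^3$ is a bijection on $\overline{\mathbb{F}}_q$, so a unique cube root $A^{1/3}$ exists, and the solution can be rewritten in the form $-\varepsilon/A^{1/3}$ with $\varepsilon$ chosen as in the statement. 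The two remaining sub-cases, obtained by exchanging the role of $X_0$ with $X_1$ or $X_2$, give $X_1^3 = -E/A^q$ and $X_2^3 = -E/A^{q^2}$ respectively, yielding the other two points on the coordinate axes via the cube roots $A^{q/3}$ and $A^{q^2/3}$.

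No step involves any serious obstacle: the whole argument rests on the vanishing-of-$3$ observation, followed by pure case analysis. The only care required is in normalizing the explicit cube roots — in particular, verifying that $A^{1/3}, A^{q/3}, A^{q^2/3}$ are well-defined elements of $\overline{\mathbb{F}}_q$ in characteristic $3$, and choosing a consistent $\varepsilon$ so that the three singular points are written in a symmetric form. This symmetric presentation is itself reflective of the underlying $\mathrm{Frobenius}$-equivariance of the configuration $\{A, A^q, A^{q^2}\}$ that appears in the defining equation of $\mathcal{S}_2$.
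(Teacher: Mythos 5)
Your proposal is correct and follows essentially the same route as the paper: in characteristic $3$ the partial derivatives reduce the singular-point system to $X_1X_2=X_0X_2=X_0X_1=0$, forcing at least two coordinates to vanish, and the surface equation then determines the unique cube root on each coordinate axis (using that $x\mapsto x^3$ is a bijection). The only item the paper adds that you omit is the one-line check that $\S_2$ has no singular points at infinity, which is worth stating explicitly since the proposition concerns all singularities of $\S_2$.
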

\begin{proof}
In this case, system \eqref{initial:system} becomes 

\[
\begin{cases}
X_{0}X_{1}X_{2}= AX_{0}^{3}+A^{q}X_{1}^{3}+A^{q^{2}}X_{2}^{3}+E\\
X_{1}X_{2}=0\\
X_{0}X_{2}=0\\
X_{0}X_{1}=0\\
\end{cases}
\]
From the last three equations it follows that at least two among $X_{0}$, $X_{1}$ and $X_{2}$ have to be 0.
Suppose $X_{0}=X_{1}=0$, obtaining $A^{q^{2}}X_{2}^{3}+E=0.$ If $E=0$, the unique solution is $X_0=X_1=X_{2}=0$, otherwise we have 
$$X_{2}^{3}=-\frac{E}{A^{q^{2}}}$$
and this equation is soluble if and only if $-\frac{E}{A^{q^{2}}}$ is a cube in $\mathbb{F}_{q^{3}}$. Since $\mathrm{char}(\mathbb{F}_{q})=3$, $A^{q^{2}}$ is always a cube in $\mathbb{F}_{q^{3}}$ 
and $E$ is always a cube since $x^3$ is a permutation of the field. This means that the above equation is always soluble and there is one possible value for $X_{2}$
$$X_{2}=-\frac{\varepsilon}{A^{\frac{q^{2}}{3}}}$$
where $E=\varepsilon^{3}$. Hence, we have the solution $\left(0,0,-\frac{\varepsilon}{A^{\frac{q^{2}}{3}}}\right).$

Iterating the above reasoning for the other two cases, we have the thesis. It is also immediate to see that there are no singular points at the infinity. 
\end{proof}

\begin{proposition}
Let $B=C=0$, and $\mathrm{char}(\mathbb{F}_{q})=2$. Then the only possible singularity of $\S_2$ is the point $(0,0,0)$.
\end{proposition}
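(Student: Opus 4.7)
The plan is to study the singular-point system in characteristic $2$ and rule out all non-trivial singular points via the standing hypothesis that $\S_1$ (hence $\S_2$) has only isolated singularities. Since $\mathrm{char}(\F_q)=2$ makes $3=1$ and $-1=1$, the last three equations in \eqref{initial:system} collapse to
\[
X_1X_2=AX_0^2,\quad X_0X_2=A^qX_1^2,\quad X_0X_1=A^{q^2}X_2^2,
\]
together with the equation $X_0X_1X_2=AX_0^3+A^qX_1^3+A^{q^2}X_2^3+E$ of $\S_2$.

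I would first handle the case in which some coordinate vanishes. If $X_0=0$, the second and third of the three partial equations force $A^qX_1^2=A^{q^2}X_2^2=0$; since $A\neq 0$, both $X_1$ and $X_2$ vanish. The fully analogous argument, permuting the roles of the variables (and of $A,A^q,A^{q^2}$), covers $X_1=0$ and $X_2=0$. Substituting $(0,0,0)$ in the equation of $\S_2$ then yields $E=0$, so $(0,0,0)$ is a singular point precisely when $E=0$.

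The heart of the argument is the case $X_0X_1X_2\neq 0$. Multiplying the three partial equations side by side gives $(X_0X_1X_2)^2=\mathrm{N}(A)\,(X_0X_1X_2)^2$, whence $\mathrm{N}(A)=1$. Multiplying each partial equation by the variable missing from its right-hand side produces the chain $X_0X_1X_2=AX_0^3=A^qX_1^3=A^{q^2}X_2^3$, and feeding this into the equation of $\S_2$ collapses it to $X_0X_1X_2=3\,X_0X_1X_2+E=X_0X_1X_2+E$ in characteristic $2$, forcing $E=0$ as well. At this point, with $B=C=E=0$ the defining equation of $\S_2$ is homogeneous of degree $3$, so $\S_2$ is the affine cone with vertex $(0,0,0)$ over the projective plane cubic cut out by the same equation, and any singular point distinct from the origin would drag an entire line of singular points through the origin along with it, contradicting the isolated-singularity hypothesis. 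Thus $(0,0,0)$ is the only possible singular point.

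The main obstacle is precisely this last reduction: over $\overline{\F}_q$ the subcase $\mathrm{N}(A)=1$ does produce genuine additional zeros of the Jacobian system (one can exhibit them explicitly via Hilbert~90), so the statement is not a purely formal consequence of the partial-derivative system and must be closed out by invoking the standing assumption of isolated singularities together with the cone structure triggered by $E=0$.
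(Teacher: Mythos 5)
Your proof is correct and follows essentially the same route as the paper: analyze the characteristic-$2$ Jacobian system, show that a singular point with $X_0X_1X_2\neq 0$ forces $\mathrm{N}(A)=1$ and $E=0$, and then rule out that subcase using the standing hypothesis on $\S_1$ (the paper excludes the resulting infinite family of singular points by citing the $\delta\le 4$ bound of Theorem \ref{thm:4singular}, while you invoke the homogeneity/cone structure to produce a non-isolated singular line — the same contradiction). Your derivation of $\mathrm{N}(A)=1$ by multiplying the three partial-derivative equations, and of $E=0$ by substituting $X_0X_1X_2=AX_0^3=A^qX_1^3=A^{q^2}X_2^3$ into the surface equation, is a slightly cleaner packaging of the paper's reduction to system \eqref{char2:div}.
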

\begin{proof}

In this case, system \eqref{initial:system} becomes

\begin{equation}
\label{char2:system}
\begin{cases}
X_{0}X_{1}X_{2}= AX_{0}^{3}+A^{q}X_{1}^{3}+A^{q^{2}}X_{2}^{3}+E\\
X_{1}X_{2}=AX_{0}^{2}\\
X_{0}X_{2}=A^{q}X_{1}^{2}\\
X_{0}X_{1}=A^{q^{2}}X_{2}^{2}\\
\end{cases}
\end{equation}

Substituting, it is possible to see that $E$ must be equal to zero and that 
\eqref{char2:system} 
is equivalent to
\begin{equation}
\label{char2:div}
\begin{cases}
X_0^{3}=A^{q-1}X_1^3\\
X_1^{3}=A^{q^{2}-q}X_2^3\\
X_0^{3}=A^{q^2-1}X_2^3\\
\end{cases}
\end{equation}

Since we are looking for solutions with coordinates in $\overline{\F}_{q}$, note that we have all the solutions of the form
$$(\zeta_{3,i} A^{\frac{q^2-1}{3}}X_2,\zeta_{3,j} A^{\frac{q^2-q}{3}}X_2,X_2)$$
for $X_2\in \overline{\F}_q$ and $\zeta_{3,i}$, $\zeta_{3,j}$ cubic roots of unity.

Considering now the first equation in \eqref{char2:system}, we have that $(\zeta_{3,i} A^{\frac{q^2-1}{3}}X_2,\zeta_{3,j} A^{\frac{q^2-q}{3}}X_2,X_2)$ is a solution different from $(0,0,0)$ if and only if 
\begin{equation}\notag
\begin{split}
A^{\frac{q^2-1}{3}}A^{\frac{q^2-q}{3}}X_2^3&=A^{q^2}X_2^3+A^{q^2}X_2^3+A^{q^2}X_2^3\\
A^{\frac{q^2+q+1}{3}}&=1.
\end{split}
\end{equation} 
So, we have solutions if and only if $\textrm{N}(A)=1$.

We can sum up the situation as follows:

\begin{itemize}
\item if $\mathrm{N}(A)=1$, then there are more than 4 solutions to the system, in fact all triples of the form

$$(\zeta_{3,i} A^{\frac{q^2-1}{3}}X_2,\zeta_{3,j} A^{\frac{q^2-q}{3}}X_2,X_2)$$

satisfy the system, for every value of $X_2$ in $\overline{\F}_q$. However, this is not possible if the surface is irreducible (as it is in our case), due to Theorem \ref{thm:4singular}.

\item if $\mathrm{N}(A)\neq 1$, there are no solutions to the system different from $(0,0,0)$ (which is a solution if and only if $E=0$).
\end{itemize}

\end{proof}

Considering what we have just found regarding the solutions to system \eqref{char2:div}, we conclude also that there are no singular points at the infinity. In fact, the only solution to the system in our case would be the point with coordinates $[0:0:0:0]$.

The same result can be obtained for the general case, we do not show the proof of this proposition since it is analogue to the one just written. This can be easily adapted doing some different reductions in system \eqref{initial:system} and then noticing that $\mathrm{N}(A)=27$.
\begin{proposition}
Let $B=C=0$, and $\mathrm{char}(\mathbb{F}_{q})\ne2,3$. The only possible singularity of $\S_2$ is the point $(0,0,0)$.
\end{proposition}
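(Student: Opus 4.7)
The strategy is to mirror the preceding characteristic-$2$ argument, with the key distinction that when $\mathrm{char}(\F_q)\neq 3$ the factor of $3$ arising from differentiating $X_i^3$ is a unit, so the Jacobian system \eqref{initial:system} simplifies cleanly. Multiplying the three partial-derivative equations $X_1X_2=3AX_0^2$, $X_0X_2=3A^qX_1^2$, $X_0X_1=3A^{q^2}X_2^2$ together yields
\[
X_0^2X_1^2X_2^2 \;=\; 27\,\mathrm{N}(A)\,X_0^2X_1^2X_2^2,
\]
so either (i) at least one coordinate vanishes, or (ii) $27\,\mathrm{N}(A)=1$.

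In case (i), assume without loss of generality that $X_0=0$. The third equation then reads $3A^qX_1^2=0$, and since $A\neq 0$ and $3$ is a unit in $\F_q$, this gives $X_1=0$; the fourth equation similarly forces $X_2=0$. Substitution into the first equation of the system yields $E=0$. By the cyclic symmetry of the problem, the only candidate singularity arising in this branch is the origin, and it is actually singular precisely when $E=0$.

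In case (ii), multiplying $X_1X_2=3AX_0^2$ by $X_0$ (and cyclically) gives $X_0X_1X_2=3AX_0^3=3A^qX_1^3=3A^{q^2}X_2^3$; plugging these identities into the first equation of \eqref{initial:system} forces $E=0$. Writing $X_0=\xi X_2$ and $X_1=\eta X_2$, one reads off $\xi^3=A^{q^2-1}$, $\eta^3=A^{q^2-q}$, together with the compatibility $\xi\eta=3A^{q^2}$; cubing the last relation shows it is consistent exactly when $27\,\mathrm{N}(A)=1$. But then $X_2\in\overline{\F}_q^{*}$ is a free parameter, producing a one-parameter family of singular points on $\S_2$ and contradicting Theorem \ref{thm:4singular}, which bounds the number of isolated double points of an irreducible cubic surface by $4$. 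The same trick applied on the hyperplane at infinity $X_3=0$ rules out singularities there as well, because the $\partial/\partial X_3$ equation is automatically satisfied and cases (i)--(ii) proceed identically (with the projective constraint excluding $[0:0:0:0]$).

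The only real obstacle is the bookkeeping in case (ii): one has to verify that the cube-root compatibility $\xi\eta=3A^{q^2}$ is guaranteed by, and only by, the norm condition $27\,\mathrm{N}(A)=1$, so that the hypothetical family of singularities is indeed non-empty. Once this is observed, the contradiction with Theorem \ref{thm:4singular} is immediate and the proposition follows.
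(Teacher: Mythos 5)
Your argument is correct and is exactly the adaptation of the characteristic-$2$ proof that the paper invokes for this case: reduce the Jacobian system, show that a vanishing coordinate forces the origin (with $E=0$), and show that the non-degenerate branch imposes a norm condition on $A$ that would produce a one-parameter family of singular points, which is excluded by the standing assumption that $\S_1$ is irreducible with isolated singularities. Your condition $27\,\mathrm{N}(A)=1$ is the correct one (the paper's parenthetical ``$\mathrm{N}(A)=27$'' appears to be a misprint), and the exact constant is in any case immaterial to the conclusion.
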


\section{$\S_1$ irreducible with isolated singularities}
\label{sec:isosing:general}
We investigate now the general case $\S_1$ irreducible and with only isolated singularities.
As already noted in section \ref{subsec:isolatedsing}, if $\S_1$ is an irreducible cubic surface with isolated singularities, then its singular points are double points.

As pointed out above (Theorem \ref{thm:goal}), we wish to find a bound of type 
$$q^2+\eta q+\mu \leq q^2+7q+1$$
for the four possible cases of singularities presented in Theorem \ref{thm:4singular} ($\delta=1,2,3,4$).

We investigate the possible cases switching from the surface $\S_2$ to the surface $\S_1$ and vice-versa, thanks to the correspondence established between the two surfaces in section \ref{2cubic}. 
Note that the cases in which $\S_1$ has one singular $\F_q$-rational point or two singular $\F_{q^2}$-rational points have already been treated, more in general, in section \ref{sec:general}. Therefore, we are left to treat only some of the cases in which the surface has three or four singular points (see Remark \ref{rem:cases1} and \ref{rem:cases2}).

Firstly, note that the affine singular points of $\mathcal{S}_2$ correspond to the solutions of 
	\begin{equation}\notag
	\label{system:singular}
	\begin{cases}
X_0X_1X_2=AX_0^3+A^qX_1^3+A^{q^2}X_2^3+BX_0^2+B^qX_1^2+B^{q^2}X_2^2+CX_0+C^qX_1+C^{q^2}X_2+E\\
X_1X_2=3AX_0^2+2BX_0+C\\
X_0X_2=3A^qX_1^2+2B^qX_1+C^q\\
X_0X_1=3A^{q^2}X_2^2+2B^{q^2}X_2+C^{q^2}
	\end{cases}	
	\end{equation}

\begin{remark}
Regarding the singular points of $\S_2$ at the infinity, we note that we can exploit the computations we did while investigating the case $B=C=0$. In fact, considering $\mathbb{P}^{3}(\overline{\F}_{q})$ and points in $\mathbb{P}^{3}(\overline{\F}_{q})$ as $[X_0:X_1:X_2:X_3]$, with $X_3=0$ equation of the plane at the infinity, it can be seen that singular points at the infinity of $\S_2$ are those satisfying the system:

\begin{equation}\notag
\begin{cases}
X_{0}X_{1}X_{2}= AX_{0}^{3}+A^{q}X_{1}^{3}+A^{q^{2}}X_{2}^{3}\\
X_{1}X_{2}=3AX_{0}^{2}\\
X_{0}X_{2}=3A^{q}X_{1}^{2}\\
X_{0}X_{1}=3A^{q^{2}}X_{2}^{2}\\
\end{cases}
\end{equation} 
which is exactly system \eqref{initial:system} (i.e. the system considered in the case $B=C=0$) with $E=0$. Note that the fact that our system is equivalent to system \eqref{initial:system} with $E=0$ does not mean that we are considering the case in which the equation of $\S_2$ has $E=0$, it is just a matter of algebraic computations.\\
Therefore, considering what we have found in section \ref{B=C=0} regarding the solutions to this system, we conclude that there are no points at the infinity. In fact, the only solution to the system in our case ($\S_2$ irreducible) would be the point with coordinates $[0:0:0:0]$, which is not a point of the projective space. 
 
\end{remark}

Recall now Remark \ref{rem:rational} and another important fact (\cite{BS2018}).

\begin{remark}
If a singular point of $\S_2$ is $\F_{q^6}$-rational, then the corresponding singularity of $\S_1$ will be $\F_{q^2}$-rational, considering the way in which we have defined the correspondence between $\S_1$ and $\S_2$ in section \ref{2cubic}.
\end{remark}

\subsection{Three singular points}
\label{subsec:3sing}

We investigate now the case in which $\S_1$ has exactly three singular points $P_1$, $P_2$ and $P_3$. Recalling Remark \ref{rem:rational}, we know that there are two possible situations that may happen: either one among $P_1$, $P_2$ and $P_3$ is $\F_q$-rational, or all of them are $\F_{q^3}$-rational and conjugates. If the first situation happens, the bound given by \eqref{bound:1singular:6} applies. Hence, we need to investigate the second possible situation.

We can make use of the results in \cite{BS2018} to solve this case. 
First of all, the following proposition, which directly follows from Bézout's theorem, tells us that the three points cannot be collinear.

\begin{proposition}
Let $\mathcal{C}$ be a cubic curve such that it has three double points. Then $\mathcal{C}$ is completely reducible and splits in the product of three lines, each passing through a pair of its singular points.
\end{proposition}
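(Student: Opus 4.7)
The approach is entirely via Bézout's theorem applied to suitable lines in the plane. The plan is to first rule out the degenerate configuration where $P_1, P_2, P_3$ are collinear, and then to exhibit three distinct lines that must each be a component of $\mathcal{C}$.

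First I would show that the three double points cannot be collinear. Suppose instead that they all lie on a common line $L$. The intersection multiplicity of $L$ with $\mathcal{C}$ at each $P_i$ is at least $2$ (since each $P_i$ is a double point of $\mathcal{C}$), giving a total of at least $6$. Since $6 > \deg(L)\cdot\deg(\mathcal{C}) = 3$, Bézout forces $L$ to be a component of $\mathcal{C}$, so we may write $\mathcal{C} = L \cup \mathcal{Q}$ with $\mathcal{Q}$ a conic. The points $P_1, P_2, P_3$ are singular on $\mathcal{C} = L \cup \mathcal{Q}$, and since $L$ is smooth they must all lie on $\mathcal{Q}$ as well. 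Applying Bézout again to $L$ and $\mathcal{Q}$ (intersection bounded by $2$, but we have at least $3$ common points), $L$ must also be a component of $\mathcal{Q}$, so $\mathcal{C} = 2L + L'$ for some line $L'$. But then every point of $L$ is singular on $\mathcal{C}$, contradicting the assumption that $\mathcal{C}$ has only three double points. Hence $P_1, P_2, P_3$ are non-collinear.

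Next I would run the Bézout argument on the three genuinely distinct lines $L_{ij}$ through each pair $\{P_i, P_j\}$. For each such $L_{ij}$, the intersection multiplicity with $\mathcal{C}$ at $P_i$ and at $P_j$ is at least $2$ each, totalling at least $4 > 3$; so $L_{ij}$ is a component of $\mathcal{C}$. Since $P_1, P_2, P_3$ are non-collinear, the three lines $L_{12}, L_{13}, L_{23}$ are pairwise distinct, and together they account for three linear components of a cubic. Since $\deg(\mathcal{C})=3$, no further component can appear, and the factorisation $\mathcal{C} = L_{12} \cup L_{13} \cup L_{23}$ is forced, which is the claim.

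I do not foresee a real obstacle: the only subtle point is handling the collinear configuration cleanly, but the double-counting in Bézout resolves this by forcing a double line, which is incompatible with the finiteness of the singular set implicit in the statement ``three double points''. Everything else is a one-line application of Bézout.
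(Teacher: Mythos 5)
Your proof is correct and follows exactly the route the paper intends: the paper gives no written proof, merely asserting that the proposition ``directly follows from B\'ezout's theorem'', and your argument (each line through two double points meets the cubic with multiplicity at least $4>3$, hence is a component; the collinear configuration is excluded because it forces a double line and thus infinitely many singular points) is the standard way to fill in that assertion.
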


The next step is to reduce the problem of counting $\F_q$-rational points on $\S_1$ to the problem of counting points on a certain quadric surface. 

More specifically, considering the correspondence given by $\psi$ between $\F_q$-rational points of $\S_1$ and $\F_{q^3}$-rational points of $\S_2$ (see Remark \ref{rem:correspondence}), we will know this number by counting points of the form $(\alpha,\alpha^q,\alpha^{q^2})$ on a certain quadric surface.

We first use the change of coordinates given by the following proposition in order to get a new form for the equation of the cubic surface $\S_1$.

\begin{proposition}[\cite{BS2018}]
\label{prop:coordchange}
Let $\mathcal{S}$ be a cubic surface over $\P^3(\F_q)$, considered with projective coordinates $[r_0:r_1:r_2:T]$, and such that it has exactly three conjugates $\F_{q^3}$-rational double points, namely $P_1,P_2$ and $P_3$. Then $\mathcal{S}$ is projectively equivalent to the surface having affine equation, for certain $\beta,\gamma\in\F_{q^3}$
		\[
		r_0r_1r_2+\beta r_0r_1+\beta^qr_1r_2+\beta^{q^2}r_0r_2+\gamma r_0+\gamma^qr_1+\gamma^{q^2}r_2=0.
		\]
\end{proposition}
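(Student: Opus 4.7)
I will construct a projective change of coordinates, defined over $\F_{q^3}$, that sends the three singular points to three coordinate vertices lying on the plane $T = 0$ and that sends a carefully chosen $\F_q$-rational point of $\mathcal{S}$ to the origin $[0{:}0{:}0{:}1]$. The double-point conditions, the vanishing at the origin, and the Frobenius equivariance of the change of coordinates will together pin the defining equation down to the stated shape.

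By the preceding proposition, $P_1, P_2, P_3$ are non-collinear, so they span a plane $\pi \subset \P^3$ which is Galois-stable and hence $\F_q$-rational. The plane section $\pi \cap \mathcal{S}$ is a plane cubic carrying the three double points, so by the same proposition it splits as the triangle of lines $\overline{P_1 P_2} \cup \overline{P_2 P_3} \cup \overline{P_1 P_3}$; since Frobenius cyclically permutes these three lines and fixes no vertex, their union contains no $\F_q$-rational point. Chevalley--Warning (applied to the cubic in four homogeneous variables) guarantees $\mathcal{S}(\F_q) \neq \emptyset$, so I pick $P_4 \in \mathcal{S}(\F_q)$; by the previous observation $P_4 \notin \pi$, and hence $P_1, P_2, P_3, P_4$ are in general position in $\P^3$.

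Label the singularities so that $P_2 = P_1^q$ and $P_3 = P_1^{q^2}$, and let $\psi$ be the projective transformation with $\psi(P_i) = e_i$ for $i = 1, 2, 3, 4$, where $e_i$ denotes the $i$-th coordinate vertex. A direct calculation using $\psi^q(v^q) = (\psi(v))^q$ shows that $\sigma := \psi^q \psi^{-1}$ is the permutation with $\sigma(e_1) = e_3$, $\sigma(e_2) = e_1$, $\sigma(e_3) = e_2$, $\sigma(e_4) = e_4$. Writing $g := f \circ \psi^{-1}$ for the $\F_q$-rational defining polynomial $f$ of $\mathcal{S}$ transported to the new coordinates, the identity $f^q = f$ yields the equivariance $g^q(y) = g(\sigma^{-1} y)$. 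The hypothesis that $e_1, e_2, e_3$ are double points of $g = 0$ kills every coefficient of a monomial containing any $r_i^2$ or $r_i^3$ for $i \in \{0, 1, 2\}$, and $g(e_4) = 0$ kills the coefficient of $T^3$; only the eight monomials $r_0 r_1 r_2$, the three $r_i r_j T$ with $i \neq j$, and the three $r_i T^2$ survive. Matching coefficients against $g^q = g \circ \sigma^{-1}$ forces the coefficient of $r_0 r_1 r_2$ to lie in $\F_q$ (normalize to $1$), and pins the coefficients of $(r_0 r_1 T, r_1 r_2 T, r_0 r_2 T)$ and of $(r_0 T^2, r_1 T^2, r_2 T^2)$ into Frobenius orbits $(\beta, \beta^q, \beta^{q^2})$ and $(\gamma, \gamma^q, \gamma^{q^2})$ respectively, for suitable $\beta, \gamma \in \F_{q^3}$. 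Dehomogenizing at $T = 1$ then recovers the claimed affine equation.

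The main technical obstacle is the bookkeeping of the Galois action: one must verify that $\sigma$ is genuinely the cyclic permutation described in the correct direction, and then track which Frobenius-orbit pattern each of the eight surviving coefficients must satisfy in order to be consistent with $g^q = g \circ \sigma^{-1}$. A secondary subtlety is the verification that $\pi \cap \mathcal{S}$ carries no $\F_q$-rational points, which is precisely what forces the Chevalley--Warning rational point $P_4$ to land off $\pi$ and secures the validity of the four-point frame $\{P_1, P_2, P_3, P_4\}$.
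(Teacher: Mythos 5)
Your argument is correct, and it supplies a proof that the paper itself does not contain: Proposition \ref{prop:coordchange} is only quoted from \cite{BS2018}, so the natural benchmark inside this paper is the proof of the four-point analogue, Proposition \ref{prop:coordchange4}. Your strategy mirrors that proof (adapt a projective frame to the singularities, then kill coefficients via the double-point conditions), but the three-point case forces you to manufacture a fourth frame point, and this is where your write-up adds genuinely new content: you take an $\F_q$-rational point of $\mathcal{S}$ (which exists by Chevalley--Warning, since a cubic form in four variables over $\F_q$ has a nontrivial zero) and show it must lie off the plane $\pi$ spanned by $P_1,P_2,P_3$, because $\pi\cap\mathcal{S}$ is the triangle of lines joining the $P_i$, which Frobenius permutes cyclically without fixing a vertex and which therefore carries no $\F_q$-rational point. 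That is a clean way to secure the frame, and the subsequent elimination of monomials (double points at $e_1,e_2,e_3$ kill every monomial divisible by some $r_i^2$; $g(e_4)=0$ kills $T^3$) together with the equivariance $g^q=g\circ\sigma^{-1}$ does pin the seven surviving coefficients into the stated Frobenius orbits. Note also that, unlike the four-point case, you do not need to invoke the lines of $\mathcal{S}$ through pairs of singular points: the frame conditions alone suffice.

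Two details should be made explicit before this is airtight. First, $\psi$ is determined by its four image points only up to the diagonal torus, so $\sigma=\psi^q\psi^{-1}$ is a priori a monomial transformation rather than the bare permutation; to obtain the exact orbit pattern $(\beta,\beta^q,\beta^{q^2})$ you must fix the lift by choosing representative vectors $v_1\in\F_{q^3}^4$, $v_2=v_1^q$, $v_3=v_1^{q^2}$ and $v_4\in\F_q^4$, after which $\sigma$ really is the permutation matrix and the coefficient matching goes through as you describe. Second, normalizing the $r_0r_1r_2$-coefficient to $1$ requires it to be nonzero; this holds because otherwise $T$ would divide $g$, so $\mathcal{S}$ would contain a plane and hence a singular curve, contradicting the hypothesis of exactly three isolated double points. (You also list seven surviving monomials while calling them eight --- a harmless miscount.) With these points filled in, the proof is complete.
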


Thanks to Proposition \ref{prop:coordchange}, we get a new model for the surface $\S_1$, and after using a Cremona transform we get the equation of the quadric

\[
	\mathcal{Q}:\beta z_3+\beta^qz_1+\beta^{q^2}z_2+\gamma z_2z_3+\gamma^qz_1z_3+\gamma^{q^2}z_1z_2-1=0.
	\]
which is irreducible, see \cite{BS2018} for details.

By construction, the points on $\S_1$ are in bijection with the points on $\mathcal{Q}$ in the form $(\delta,\delta^q,\delta^{q^2})$, where $\delta\in\F_{q^3}$. 

Nevertheless, it is possible to show that such points (and hence the affine $\F_q$-rational points of $\S_1$) are in one-to-one correspondence with the $\F_q$-rational points of an irreducible $\F_q$-rational quadric surface (see \cite{BS2018} for details). Then, by well-known results on quadric surfaces (see \cite[Section 15.3]{hirsch:finite}), the following bound is established:
	\begin{equation}
	\label{bound:3singular}
	|\S_1(\F_q)|= q^2+\eta q+1,\quad\eta\in\{0,1,2\}.
	\end{equation}

\subsection{Four singular points}
\label{subsec:4sing}

The last case we have to deal with is the one in which $\S_1$ has exactly four singular points $P_1$, $P_2$, $P_3$ and $P_4$. Note that if one among $P_1$, $P_2$, $P_3$ and $P_4$ is $\F_q$-rational, then we already have the desired bound given by \eqref{bound:1singular:6}. If instead the four points are two couples of conjugates $\F_{q^2}$-rational points, then the bound is already established by \eqref{bound:2singular}.

Therefore, we are left with the case in which the four points $P_1$, $P_2$, $P_3$ and $P_4$ are $\F_{q^4}$-rational and conjugates. 
To deal with this case, we introduce a change of coordinates and exploit this to transform the equation of the cubic surface $\S_1$ in a way that will be useful for our later computations.

\begin{proposition}
\label{prop:coordchange4}
Let $\mathcal{S}$ be a cubic surface over $\P^3(\F_q)$, considered with projective coordinates $[x_0:x_1:x_2:x_3]$, and such that it has exactly four conjugates $\F_{q^4}$-rational double points, namely $P_1,P_2$, $P_3$ and $P_4$. Then $\mathcal{S}$ is projectively equivalent to the surface having affine equation
		\[
		x_0x_1x_2+\beta_0 x_0x_1+\beta_1 x_1x_2+\beta_2 x_0x_2=0.
		\]
for certain $\beta_0,\beta_1,\beta_2 \in\F_{q^4}$.
\end{proposition}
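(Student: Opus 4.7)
The plan is to mirror the strategy of Proposition \ref{prop:coordchange}: apply an $\F_{q^4}$-rational projective transformation $\phi$ of $\mathbb{P}^3$ that sends the four conjugate singularities $P_1,P_2,P_3,P_4$ to the four coordinate vertices $e_0=[1:0:0:0]$, $e_1=[0:1:0:0]$, $e_2=[0:0:1:0]$, $e_3=[0:0:0:1]$, and then describe the cubics in $\mathbb{P}^3$ that are singular at each $e_i$. For such a $\phi$ to exist, one first needs $P_1,\ldots,P_4$ to be projectively independent, i.e.\ not coplanar.

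Non-coplanarity is the heart of the argument. Suppose by contradiction that $P_1,\ldots,P_4$ lie on a common plane $\pi$. Since $\mathcal{S}$ is absolutely irreducible of degree $3$ and distinct from $\pi$, the intersection $\mathcal{C}=\mathcal{S}\cap\pi$ is a plane cubic, and a standard local computation on tangent cones shows that each double point $P_i\in\pi$ of $\mathcal{S}$ remains (at least) a double point of $\mathcal{C}$. An inspection of the possible decompositions of a plane cubic reveals that any such curve carrying four distinct singular points must contain a non-reduced linear component $2\ell$, whose entire zero set is singular on $\mathcal{C}$. Hence $P_1,\ldots,P_4$ lie on a common line $\ell\subset\mathcal{S}$, and every point of $\ell$ is singular on $\mathcal{S}$, contradicting the assumption that the singularities of $\mathcal{S}$ are isolated. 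Therefore the four points are projectively independent, and the required $\phi$, being determined by the $\F_{q^4}$-rational points $P_1,\ldots,P_4$, can be chosen to be defined over $\F_{q^4}$.

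Once we pass to $\phi(\mathcal{S})$, the cubic surface has its singular points exactly at $e_0,e_1,e_2,e_3$. Imposing vanishing of the defining polynomial together with all its partial derivatives at each $e_i$ eliminates every monomial of the form $x_i^3$ or $x_i^2x_j$ (with $j\ne i$), so only the four ``squarefree'' cubic monomials survive, giving
\[
\phi(\mathcal{S}):\quad \alpha_0\, x_1x_2x_3 + \alpha_1\, x_0x_2x_3 + \alpha_2\, x_0x_1x_3 + \alpha_3\, x_0x_1x_2 = 0
\]
for some $\alpha_0,\alpha_1,\alpha_2,\alpha_3\in\F_{q^4}$. Irreducibility forces every $\alpha_i$ to be nonzero: otherwise some $x_i$ would divide the equation and $\phi(\mathcal{S})$ would contain a coordinate plane. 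In particular $\alpha_3\ne 0$, so a diagonal $\F_{q^4}$-rescaling of the coordinates normalizes $\alpha_3=1$; dehomogenizing via $x_3=1$ and relabeling $\beta_0:=\alpha_2$, $\beta_1:=\alpha_0$, $\beta_2:=\alpha_1$ yields the affine equation in the statement, with $\beta_0,\beta_1,\beta_2\in\F_{q^4}$. The main obstacle is the non-coplanarity step of the second paragraph: this is precisely where the hypotheses of irreducibility and isolated singularities must be combined with the classification of plane cubics. Everything else reduces to a direct coefficient count in the space of cubic forms vanishing to order two at four prescribed points.
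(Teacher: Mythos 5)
Your proof has the same overall skeleton as the paper's (move the four conjugate double points to the coordinate vertices, then determine which cubic monomials can survive), but it differs in two ways worth recording. For the coefficient count, you kill all monomials $x_i^3$ and $x_i^2x_j$ directly from the vanishing of $F$ and its first partials at each vertex $e_i$; the paper instead uses only $P_i\in\mathcal{S}$ to remove the $x_i^3$ terms and the constant, and then substitutes a general point of each line $P_iP_4$ (contained in $\mathcal{S}$ because it joins two double points of a cubic) to remove the remaining coefficients $\alpha_i,\gamma_i$. Your version is more economical, it works uniformly in every characteristic (in characteristic $3$ the condition $\partial F/\partial x_i(e_i)=0$ is vacuous, but $F(e_i)=0$ still removes $x_i^3$), and it also makes explicit why the coefficient of $x_0x_1x_2$ is nonzero (irreducibility), a point the paper leaves implicit when it normalizes that coefficient to $1$. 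Your remark that a projectivity is pinned down by five, not four, points is also correct; the paper's choice of the auxiliary point $U$ and of the four conjugate planes is what lets it track the Frobenius action on the coefficients, but since the statement only asserts $\beta_i\in\F_{q^4}$ your weaker claim suffices.

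The step that needs repair is the non-coplanarity argument. You correctly observe that a plane cubic with four distinct singular points must be of the form $2\ell+m$ with all singular points on $\ell$, but the inference ``every point of $\ell$ is singular on $\mathcal{C}$, hence every point of $\ell$ is singular on $\mathcal{S}$'' is false as stated: a plane section of a surface can contain a double line without that line lying in the singular locus of the surface. Concretely, if $\pi: x_3=0$ and $\ell: x_2=x_3=0$, then $2\ell\subset\mathcal{S}\cap\pi$ means $F=x_2^2L+x_3Q$, and the singular points of $\mathcal{S}$ on $\ell$ are exactly the zeros of the binary quadric $Q|_\ell$ --- generically just two points. The conclusion is nevertheless recoverable from data you have not yet used: the four points are singular on $\mathcal{S}$ itself, so each of the four degree-two partials of $F$ vanishes at four distinct points of $\ell$ and hence identically on $\ell$, while $F|_\ell$ is a binary cubic with four zeros and so vanishes identically; therefore $\ell\subset\mathrm{Sing}(\mathcal{S})$ after all, contradicting the isolatedness of the singularities. (The paper sidesteps this subtlety by first showing that no three of the points are collinear, via a general plane through the would-be line, and then ruling out coplanarity by noting that a plane cubic cannot contain all six connecting lines.) With that one-line patch your argument is complete.
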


\begin{proof}
We start observing that no three points among $P_1,P_2$, $P_3$ and $P_4$ are collinear. In fact, suppose without loss of generality that $P_1,P_2$, $P_3$ are collinear: then a general plane through $P_1,P_2$, $P_3$ would meet the surface in a cubic curve with three collinear double points, which means that the only possibility is that the curve is union of a double line $\ell$ (the line joining the three double points) and another line $r$ (see \cite{coray:singular}). 
In this case, the whole line $\ell$ would be double on the surface, which is impossible since by hypothesis $\mathcal{S}$ has only isolated singularities.

Hence, we have that no three points among $P_1,P_2$, $P_3$ and $P_4$ are collinear and from this we can deduce that they are not coplanar. In fact, if they were on a same plane, then the intersection of this plane with the cubic surface would be a reducible cubic curve which should contain each line connecting two distinct points among $P_1,P_2$, $P_3$ and $P_4$, and this is clearly impossible.

Therefore, we can set a new projective frame as follows: consider the four planes defined by the four possible combinations of the points, that is

\begin{itemize}
\item the plane $\pi$ through $P_2$, $P_3$ and $P_4$ 
\item the plane $\rho$ through $P_1$, $P_2$ and $P_3$
\item the plane $\tau$ through $P_1$, $P_2$ and $P_4$
\item the plane $\kappa$ through $P_1$, $P_3$ and $P_4$.
\end{itemize} 
Note that, since the points $P_1,P_2$, $P_3$ and $P_4$ are conjugates, it follows that the four planes defined above are conjugates as well, in particular 
\[
\pi^q = \kappa, \quad \kappa^q = \tau, \quad \tau^q=\rho, \quad \rho^q=\pi.
\]
Let $U$ be a point that does not lie in any of the planes $\pi, \rho, \tau, \kappa$, then we have that $(P_1,P_2,P_3,P_4,U)$ is a projective frame.

With respect to this projective frame, homogeneous coordinates for the points $P_1,P_2,P_3,P_4$ and $U$ are exactly:
\[
P_1=[1:0:0:0], \quad P_2=[0:1:0:0], \quad P_3=[0:0:1:0], \quad P_4=[0:0:0:1]\quad U=[1:1:1:1]
\]
and projective coordinates in this new frame are $[x_0:x_1:x_2:x_3]$, where $x_0=0$ is the equation of $\pi$, $x_3=0$ is the equation of $\rho$, $x_2=0$ is the equation of $\tau$ and $x_1=0$ is the equation of $\kappa$.

Considering this projective frame, we can write the equation of the surface as 
\[
	x_0x_1x_2+ x_3(\alpha_0x_0^2+\alpha_1x_1^2+\alpha_2x_2^2+\beta_0x_0x_1+\beta_1x_1x_2+\beta_2x_0x_2)+x_3^2(\gamma_0x_0+\gamma_1x_1+\gamma_2x_2)=0
\]
where $\alpha_i,\beta_i,\gamma_i\in\F_{q^4}$ for $i\in\{0,1,2\}$, since, as we observed, the planes $\pi,\rho,\tau$ and $\kappa$ are conjugates.

Note also that in the equation there is not the constant term, since $P_4\in \mathcal{S}$, and there are no terms in $x_i^3$, $i=0,1,2$, since $P_1\in \mathcal{S}$. 

Moreover, since $P_1$ and $P_4$ are double points of the surface $\mathcal{S}$, the line $r$ through them is contained in $\mathcal{S}$. So, if we write the expression of a general point on this line, we have $P_{\lambda,\mu}=(\lambda:0:0:\mu)$. Substituting this expression in the equation of $\mathcal{S}$ we have

\begin{equation}
\begin{split}
\mu \alpha_0 \lambda^2 + \mu^2 \gamma_0 \lambda =0\\
\mu \lambda (\alpha_0 \lambda +\gamma_0 \mu)=0\\
\end{split}
\end{equation}
and this equation holds for every value of $\mu$ and $\lambda$.
This yields that both $\alpha_0$ and $\gamma_0$ must be zero.

Iterating this reasoning also for the lines through $P_2$ and $P_4$ and through $P_3$ and $P_4$, we get the following equation for the surface $\mathcal{S}$:

\begin{equation}
\label{eq:surf4singchange}
x_0x_1x_2+\beta_0 x_0x_1+\beta_1 x_1x_2+\beta_2 x_0x_2=0
\end{equation}
\end{proof}

We apply now Proposition \ref{prop:coordchange4} to obtain an affine equation in the form \eqref{eq:surf4singchange} for $\S_1$, namely  
\begin{equation}
\label{eq:S1change}
x_0x_1x_2+\beta_0 x_0x_1+\beta_1 x_1x_2+\beta_2 x_0x_2=0
\end{equation}
where $\beta_0,\beta_1,\beta_2 \in\F_{q^4}$.

Applying the Cremona transformation defined by 
\[
(x_0,x_1,x_2)\longmapsto \biggl(\frac{1}{x_0},\frac{1}{x_1},\frac{1}{x_2}\biggr)
\]
and calling $z_0=\frac{1}{x_0}$, $z_1=\frac{1}{x_1}$, $z_2=\frac{1}{x_2}$, we see that $\S_1$ is mapped into a plane. To see this it is sufficient to take equation \eqref{eq:S1change} and divide it by $x_0 x_1 x_2$, obtaining

\begin{equation}
\label{eq:S1cremona}
1+\beta_0 \frac{1}{x_2}+\beta_1 \frac{1}{x_0}+\beta_2 \frac{1}{x_1}=0;
\end{equation}
then, applying the transformation gives as result
\begin{equation}
\label{eq:S1cremonaplane}
\Pi : 1+\beta_0 z_2+\beta_1 z_0+\beta_2 z_1=0.
\end{equation}

We remember that our final goal is to bound the number of $\F_q$-rational points of $\S_1$, that is equivalent to counting the number of points on $\S_2$ having form $(\gamma,\gamma^q,\gamma^{q^2})$, $\gamma\in \F_{q^3}$. 
Hence, we count points in this form on the plane $\Pi$ since these points are in direct correspondence with the ones on $\S_2$ of the same form. 

Writing down $\gamma$ on the normal basis we have $\gamma=w_1 \alpha + w_2 \alpha^q + w_3 \alpha^{q^2}$. From \eqref{eq:S1cremonaplane}, taking $w_1, w_2$ and $w_3$ as a set of variables over $\F_q$, we get the following equation
\begin{equation}
\label{eq:cremonapoints}
\beta_0(w_1\alpha^{q^2}+w_2\alpha+w_3\alpha^q)+\beta_1(w_1\alpha+w_2\alpha^q+w_3\alpha^{q^2})+\beta_2(w_1\alpha^q+w_2\alpha^{q^2}+w_3\alpha)+1=0.
\end{equation} 

Direct computations show that \eqref{eq:cremonapoints} is the equation of an $\F_{q^{12}}$-rational plane $\Pi^{\prime}$:
\begin{equation}
\label{eq:finalplane}
\Pi^{\prime}: w_1(\beta_0\alpha^{q^2}+\beta_1\alpha+\beta_2\alpha^q)+w_2(\beta_0\alpha+\beta_1\alpha^q+\beta_2\alpha^{q^2})+w_3(\beta_0\alpha^q+\beta_1\alpha^{q^2}+\beta_2\alpha)+1=0.
\end{equation}
By construction, $\F_q$-rational points of $\Pi^{\prime}$ are in one-to-one correspondence with the points we are looking for on our cubic surface. Hence, we want to estimate the number of $\F_q$-rational triples $(w_1,w_2,w_3)$ that are solutions to equation \eqref{eq:finalplane}. 

Since we are looking for $\F_q$-rational solutions, we recall that a plane can have at most $q^2$ $\F_q$-rational points. This implies that 
\begin{equation}
\label{bound:4singular}
|S_1(\F_q)|\leq q^2
\end{equation}
which is consistent with the statement of Theorem \ref{thm:goal}.

Therefore, considering all the cases discussed in this and in the previous sections, we conclude that we have finally proved Theorem \ref{thm:goal}.

In this final part, we will give some final remarks on the remaining cases, i.e., the case of the surface being a cone over an irreducible smooth cubic plane curve or being reducible. Note that we have already discussed these two cases in general in section \ref{sec:general}.

\begin{remark}
If $\S_1$ is a cone over a smooth absolutely irreducible cubic plane curve, we believe that it could be possible to obtain a better bound, in the same form as Theorem \ref{thm:goal}, writing explicitly the equation of such cone. Moreover, due to the variety of possible different equations of $\S_1$, it is not even clear to us if there exist choices of parameters that realize this situation.
\end{remark}

\begin{remark}
The cases in which $\S_1$ is reducible appear to be rare, and it is also unclear if they happen only for particular values of $q$. Nevertheless, up to now, we are not able to characterize them uniquely. However, in section \ref{B=C=0} we studied a particular case for which we were able to obtain explicit information on the reducibility of $\S_1$, depending on its coefficients and the base field.
\end{remark}

\begin{problem}
Characterize the reducibility of $\S_1$, and when $\S_1$ is a cone, using only relations on its coefficients and the base field.
\end{problem}

\section{AG codes arising from Norm--Trace curves}
\label{sec:agcodes}	
We already know that $\mathcal{N}_{3}$ has $N=q^5$ $\F_{q^3}$-rational points in $\mathbb{A}^2(\F_{q^3})$, and that 
\[
\mathscr{L}_{\F_{q^3}}(3q^2P_\infty)=\langle \{1,x,x^2,x^3,y,y^2,xy\} \rangle.
\]

Considering now the evaluation map 
\[
\begin{split}
\mathrm{ev}: \,\, \mathscr{L}_{\F_{q^3}}(3q^2P_\infty)&\longrightarrow(\F_{q^3})^{q^5}\\
f=ay^2+bxy+cy+dx^3+ex^2+fx+g&\longmapsto (f(P_1),\dots,f(P_N))\\
\end{split}
\]
the associated one-point code will be $C_{\mathscr{L}}(D,3q^2P_{\infty})=\mathrm{ev}(\mathscr{L}_{\F_{q^3}}(3q^2P_\infty))$, where the divisor $D$ is the formal sum of all the $q^5$ rational affine points of $\mathcal{N}_{3}(\F_{q^3})$. The weight of a codeword associated to the evaluation of a function $f\in \mathscr{L}_{\F_{q^3}}(3q^2P_\infty)$ corresponds to 

\[\mathrm{w}(\mathrm{ev}(f))=|\mathcal{N}_{3}(\F_{q^3})|-|\{\mathcal{N}_{3}(\F_{q^3})\cap\{ay^2+bxy+cy+dx^3+ex^2+fx+g=0\}\}|.\]

Using the results obtained in the previous sections, we can give some bounds in a variety of cases.

\begin{itemize}
    \item If $a=b=d=0$ then we are in a case already studied in \cite{BS2018}. More specifically:
    
    \begin{enumerate}
		\item if $c=0$ then we have to consider the zeros of $ex^2+fx+g$ that are points of $\mathcal{N}_{3}(\F_{q^3})$.
		\begin{enumerate}
			\item If $e=f=g=0$ then $\mathrm{w}(\mathrm{ev}(f))= 0$;
			\item if $e=f=0$ and $g\ne0$ then $\mathrm{w}(\mathrm{ev}(f))=q^5$;
			\item if $e=0$ and $f\ne0$ then $\mathrm{w}(\mathrm{ev}(f))= q^5-q^2$;
			\item if $f\ne0$ and $f^2-4eg=0$ then $\mathrm{w}(\mathrm{ev}(f))= q^5-q^2$;
			\item otherwise $\mathrm{w}(\mathrm{ev}(f))= q^5-2q^2$.
		\end{enumerate}
		\item On the other hand, if $c\ne0$ then we have to consider the points of $\mathcal{N}_{3}(\F_{q^3})$ that are zeros of $cy+ex^2+fx+g$.
		\begin{enumerate}
			\item If $e=f=g=0$ then $\mathrm{w}(\mathrm{ev}(f))= q^5-1$;
			\item if $e=f=0$ and $g\ne0$ then $\mathrm{w}(\mathrm{ev}(f))=q^5-q^2$;
			\item if $e=0$ and $f\ne0$ then, applying Bézout theorem, we have that 
			$$\mathrm{w}(\mathrm{ev}(f))~\ge~q^5~-~(q^2~+~q~+~1);$$
			\item otherwise, from what we said previously, $\mathrm{w}(\mathrm{ev}(f))\ge q^5-(q^2+7q+1)$.
		\end{enumerate}
		
	\end{enumerate}

    \item If $a=b=0$ and $d\ne0$ then we can obtain some information from our results on intersections.  
    \begin{enumerate} 
    \item If $c=0$ then we have to consider the points of $\mathcal{N}_{3}(\F_{q^3})$ that are zeros of $dx^3+ex^2+fx+g$.
    		\begin{enumerate}
			\item If $e=f=g=0$ then $\mathrm{w}(\mathrm{ev}(f))= q^5-q^2$;
			\item if $e=f=0$ and $g\ne0$ then $\mathrm{w}(\mathrm{ev}(f))=q^5-q^2$;
			\item otherwise $\mathrm{w}(\mathrm{ev}(f))\geq q^5-3q^2$.
		\end{enumerate}
	\item If $c\neq 0$ then we have to consider the points of $\mathcal{N}_{3}(\F_{q^3})$ that are zeros of $cy+dx^3+ex^2+fx+g$. We can do this exploiting our results on $\F_q$-rational points of the cubic surface obtained from the intersection of $\mathcal{N}_{3}$ and the cubic curve we are considering. As noted, we have different bounds according to the different shapes the surface $\S_1$ assumes.
		\begin{enumerate}
			\item If $\S_1$ is absolutely irreducible and it is not a cone over an irreducible smooth plane cubic curve, then $\mathrm{w}(\mathrm{ev}(f))\ge q^5-(q^2+7q+1)$;
			\item if there exist coefficients $c,d,e,f,g$ for which $\S_1$ is a cone over a smooth absolutely irreducible cubic plane curve, then $\mathrm{w}(\mathrm{ev}(f))\ge q^5-(q^2+2q\sqrt{q}+1)$;
			\item if there exist coefficients $c,d,e,f,g$ for which $\S_1$ is reducible, then $\mathrm{w}(\mathrm{ev}(f))\ge q^5-3q^2$.  	
		\end{enumerate}
    \end{enumerate}
    
  \item If $a\ne0$ or $b\ne 0$ unfortunately, from our results, we are not able to deduce information on the weights.

\end{itemize}

\begin{remark}
These considerations about the weight spectrum of the code tell us that, despite the fact that the dimension of the code increases with respect to the one studied in  \cite{BS2018}, the lower weights of the weight spectrum, which are those that contribute more for the computation of the PUE (Probability of the Undetected Error), do not seem to have many variations. 
\end{remark}

\begin{remark}
The results on the weights that we have obtained for the code $C_{\mathscr{L}}(D,3q^2P_{\infty})$ hold for a more general class of AG codes. 
We gave lower bounds on the weights of such a code considering some monomials that are comprised in the considered basis of the Riemann-Roch space $\mathscr{L}_{\F_{q^3}}(3q^2P_\infty)$. The same bounds hold for other codes on $\mathcal{N}_{3}$ associated to divisors $D$ and $G=kP_\infty$ with $k\ge 3q^2$. This follows because there exists a basis of $\mathscr{L}_{\F_{q^3}}(kP_\infty)$ (note that $\mathscr{L}_{\F_{q^3}}(3q^2P_\infty)$ $\subseteq \mathscr{L}_{\F_{q^3}}(kP_\infty)$ for $k\ge 3q^2$) that contains those monomials, when $k\ge 3q^2$. Hence, our results have impact on a vast range of codes arising from $\mathcal{N}_{3}$.   
\end{remark}

As noted above, our discussion does not cover all the possible cases since the basis of the Riemann-Roch space $\mathscr{L}_{\F_{q^3}}(3q^2P_\infty)$ has also the monomials $xy$ and $y^2$. 

Using our approach, it seems difficult to study the intersections of $\mathcal{N}_{3}$ with curves with terms in $xy$ or $y^2$, as the equation of the surface corresponding to the intersections would be much more complicated (for instance it can be a quartic surface).
Therefore, it still remains an open problem to determine the weight spectrum of the code $\mathscr{L}_{\F_{q^3}}(3q^2P_\infty)$.

\begin{problem}
Determine the weight spectrum of the code $\mathscr{L}_{\F_{q^3}}(3q^2P_\infty)$.
\end{problem}

\section*{Acknowledgments}
The results showed in this paper are included in L. Vicino’s MSc thesis (supervised by the first and the second author).

\bigskip

\end{document}